\documentclass[11pt,a4paper]{article}

\usepackage{graphicx,latexsym,euscript,makeidx,color,bm}
\usepackage{amsmath,amsfonts,amssymb,amsthm,thmtools,mathtools,mathrsfs,enumerate}
\usepackage[colorlinks,linkcolor=blue,citecolor=red]{hyperref}

\usepackage{geometry}
\allowdisplaybreaks[4]
\def\dbC{\mathbb{C}}   \def\cC{{\cal C}}  
     
\def\dbE{\mathbb{E}}     
\def\dbF{\mathbb{F}} \def\sF{\mathscr{F}}

   \def\cI{{\cal I}}  
     
   \def\cK{{\cal K}}  
   \def\cL{{\cal L}}  
     
     \def\bu{\bar u}
     
\def\dbP{\mathbb{P}}     \def\bz{\bar z} 
     
\def\dbR{\mathbb{R}}   
\def\dbS{\mathbb{S}}

   \def\cW{{\cal W}}

\def\ss{\smallskip}   \def\lt{\left}                           \def\diag{{\rm diag}}
\def\ms{\medskip}     \def\rt{\right}                     
          \def\lan{\langle}       \def\as{\text{a.s.}}
\def\q{\quad}         \def\ran{\rangle}       \def\tr{{\rm tr}}
\def\qq{\qquad}             \def\les{\leqslant}
\def\no{\noindent}          \def\ges{\geqslant}
\def\hp{\hphantom}         
\def\nn{\nonumber}         \def\scp{\scriptscriptstyle}
\def\rf{\eqref}            
\def\cd{\cdot}             
\def\deq{\triangleq}  \def\({\Big(}           
       \def\){\Big)}           \def\im{{\rm im\,}}
\def\wt{\widetilde}   \def\[{\Big[}           \def\bp{\begin{pmatrix}}
\def\Ra{\Rightarrow}  \def\]{\Big]}           \def\ep{\end{pmatrix}}
\def\wh{\widehat}              \def\rank{{\rm rank\,}}
           \def\Span{{\rm span\,}}

\def\scS{\scp S} 
 
\def\a{\alpha}       \def\l{\lambda}    \def\D{\varDelta}
\def\b{\beta}               \def\F{\varPhi}
\def\d{\delta}           \def\G{\varGamma}
\def\e{\varepsilon}      \def\L{\varLambda}
             \def\Om{\varOmega}
       \def\si{\sigma}    \def\Si{\varSigma}
\def\i{\infty}             \def\Th{\varTheta}
                          
\newtheoremstyle{thry}
{}      
{}      
{\sl}   
{}      
{\bf}   
{.}     
{.5em}  
{}      
\theoremstyle{thry}

\newtheorem{theorem}{Theorem}[section]
\newtheorem{proposition}[theorem]{Proposition}
\newtheorem{corollary}[theorem]{Corollary}
\newtheorem{lemma}[theorem]{Lemma}

\theoremstyle{definition}
\newtheorem{definition}[theorem]{Definition}
\newtheorem{example}[theorem]{Example}

\theoremstyle{remark}
\newtheorem{remark}[theorem]{Remark}

\def\punct{}
\newtheoremstyle{dotless}{}{}{\rm}{}{\bf}{\punct}{.5em}{}
\theoremstyle{dotless}

\makeatletter
   
   \@addtoreset{equation}{section}
   \newcommand{\setword}[2]{%
   \phantomsection
   #1\def\@currentlabel{\unexpanded{#1}}\label{#2}%
   }
\makeatother



\begin{document}  

\title{\bf Hautus-Type Criteria for Controllability \\ 
and Stabilizability of Backward-Structured \\
Stochastic Systems}

\author{Jingrui Sun\thanks{
Department of Mathematics and SUSTech International Center for Mathematics,
Southern University of Science and Technology, 
Shenzhen, Guangdong, 518055, China (Email: sunjr@sustech.edu.cn).
This author is supported by NSFC grants 12322118 and 12271242, 
and by Shenzhen Science and Technology Program grant JCYJ20250604144337051.}   
}

\maketitle  

\no{\bf Abstract.} 
This paper develops sharp Hautus-type criteria, stochastic counterparts of the
classical Popov--Belevitch--Hautus test, for exact controllability and stabilizability
of backward-structured stochastic linear systems. The main finding is that the
stochastic Hautus obstruction is not a left eigenvector, as in deterministic linear
systems, nor an arbitrary symmetric eigenmatrix, but a positive semidefinite
eigenmatrix of a Lyapunov-type operator. We prove that exact controllability is
equivalent to the absence of such nonzero positive semidefinite eigenmatrices that
are orthogonal to the control directions. This cone restriction is sharp: excluding
all symmetric eigenmatrices with the same orthogonality property is sufficient but
not necessary. We further show that stabilizability is characterized by the same
cone-restricted Hautus condition imposed only on the nonstable spectral part of the
Lyapunov-type operator. Thus the stochastic Hautus theory developed here is governed
by a simultaneous spectral restriction and cone restriction. In addition to these
criteria, we provide finite-rank and Gramian characterizations underlying exact
controllability, establish the corresponding controllability decomposition, and show
that exact controllability implies stabilizability.

\ms
\no{\bf Key words.}
Backward-structured SDEs, exact controllability, stabilizability, 
Hautus-type criteria, controllability decomposition.

\ms
\no{\bf MSC codes.}  93E03, 93B05, 93D15, 60H10.

\section{Introduction}\label{Sec:Intro} 

Let $(\Om,\sF,\dbP)$ be a complete probability space on which a standard one-dimensional 
Brownian motion $W(\cd)=\{W(t);\,t\ges 0\}$ is defined, and let $\dbF\equiv\{\sF_t\}_{t\ges0}$ 
denote its natural filtration augmented by all $\dbP$-null sets in $\sF$. 
Denote by $L^2_{\sF_t}(\Om;\dbR^n)$ the space of all $\sF_t$-measurable, square-integrable 
$\dbR^n$-valued random variables, and by $L^2_\dbF(0,T;\dbR^m)$ the space of all $\dbF$-progressively 
measurable, square-integrable $\dbR^m$-valued processes on $[0,T]$.
Consider the controlled linear stochastic differential equation (SDE, for short) 
\begin{equation}\label{SDE:ABCD}\left\{\begin{aligned}
dX(t) &= [AX(t)+Bu(t)]dt + [CX(t)+Du(t)]dW(t),  \q t\ges 0, \\
 X(0) &= x,
\end{aligned}\right.\end{equation}
where the coefficients $ A,C\in\dbR^{n\times n}$ and $B,D\in\dbR^{n\times m}$ are constant matrices.
The system \eqref{SDE:ABCD} is said to be {\it exactly controllable} on the interval $[0,T]$ if, 
for every pair $(x,\xi)\in\dbR^n\times L_{\sF_T}^2(\Om;\dbR^n)$, there exists a control process
$u(\cd)\in L^2_\dbF(0,T;\dbR^m)$ such that  
$$
X(T;x,u(\cd)) = \xi \q\as  
$$

It was shown by Peng \cite{Peng1994} that the system \eqref{SDE:ABCD} can be exactly controllable on 
$[0,T]$ only if the diffusion coefficient $D$ has full row rank. 
In this case, by applying suitable linear transformations, the system \eqref{SDE:ABCD} can be 
equivalently converted into a backward-structured SDE of the form
\begin{equation}\label{SDE:state}\left\{\begin{aligned}
dX(t) &= [AX(t)+Bu(t)+Cz(t)]dt + z(t)dW(t),  \q t\ges 0, \\
 X(0) &= x.
\end{aligned}\right.\end{equation}
Peng \cite{Peng1994} further proved that system \eqref{SDE:state} is exactly controllable if and only if 
$$
\rank(B,\;AB,\;CB,\;A^2B,\;ACB,\;CAB,\;C^2B,\;\cdots)=n.
$$
Following Peng’s work, a number of studies have examined the controllability of backward-structured SDEs. 
For instance, Liu and Peng \cite{Liu-Peng2010} extended the exact controllability results to systems with 
bounded, time-varying deterministic coefficients. 
Wang, Yang, Yong and Yu \cite{Wang-Yang-Yong-Yu2017} introduced the notion of $L^p$-exact controllability 
for linear SDEs with random coefficients, and established BSDE-based observability criteria 
together with several equivalent characterizations of both exact and null controllability. 
More recently, Yu \cite{Yu2021} developed Gramian and Kalman-type controllability conditions for linear 
mean-field SDEs with deterministic coefficients, extending classical deterministic controllability theory 
to the McKean--Vlasov setting. 
Other related controllability problems for stochastic systems have also been studied from different 
but closely connected viewpoints, including controllability operators and stochastic controllability 
for linear systems \cite{Zabczyk1981,Ehrhardt-Kliemann1982,Mahmudov-Denker2000}, Kalman-type
conditions for approximate controllability \cite{Goreac2008}, and partial controllability of SDEs
together with exact controllability of FBSDEs \cite{Wang-Yu2020}.

\ms  

In the deterministic setting, controllability and stabilizability of linear time-invariant
systems have been fundamental topics since the classical works of Kalman
\cite{Kalman1960}, Hautus \cite{Hautus1969}, and Wonham \cite{Wonham1979};
see also the monograph \cite{Trentelman-Stoorvogel-Hautus2001} for a systematic treatment. 
For the ordinary differential equation
\begin{equation}\label{Intro:ODE}
\dot X(t)=AX(t)+Bu(t),
\end{equation}
Kalman's rank condition states that \eqref{Intro:ODE} is controllable if and only if
$$
\rank(B,AB,\dots,A^{n-1}B)=n.
$$
An alternative and particularly useful characterization is the Popov--Belevitch--Hautus
(PBH) test, often also called the Hautus lemma; see \cite{Hautus1969}.
It asserts that system \eqref{Intro:ODE} is controllable if and only if
$$
\rank(\l I-A,\;B)=n,\q \forall\l\in\dbC.
$$
Equivalently, there is no nonzero left eigenvector $y$ of $A$ such that $B^\top y=0$.
The corresponding stabilizability criterion is obtained by imposing the same condition
only on the unstable spectral part, namely for those $\l\in\si(A)$ with $\Re\l\ges0$; 
see, for example, \cite{Wonham1979,Zhou-Doyle-Glover1996,Trentelman-Stoorvogel-Hautus2001}.  
These algebraic tests are among the most powerful tools in deterministic linear control
theory, since they reveal controllability and stabilizability directly from the
coefficient matrices. 

\ms

For stochastic systems, however, the situation is more subtle. 
The interaction between the drift, the diffusion, and the adaptedness requirement 
prevents a direct transfer of the deterministic PBH test. 
Related spectral, Lyapunov, and PBH-type ideas for stochastic systems have appeared 
in several closely connected directions. For mean-square and exponential stability, 
as well as Lyapunov-type criteria for stochastic differential equations, 
see \cite{Khasminskii2012,Dragan-Morozan-Stoica2013}. 
For stochastic stabilizability, exact observability, detectability, and stochastic 
PBH-type criteria, see \cite{Zhang-Chen2004,Damm2007,Zhang-Zhang-Chen2008}. 
These works provide important stochastic analogues of stability, observability, 
and PBH-type ideas, whereas the present paper focuses on Hautus-type criteria for exact 
controllability and stabilizability of the backward-structured system \rf{SDE:state}.
In particular, for this system \rf{SDE:state}, the diffusion variable $z(\cd)$ appears 
both as the martingale integrand and through the drift term $Cz(\cd)$.  
Consequently, the reachable directions are no longer generated only by successive
powers of the matrix $A$, but by all finite products formed from the two matrices
$A$ and $C$.
This feature is already reflected in Peng's rank condition, where the reachable 
subspace is generated by
$$
\im B, \q \im AB, \q \im CB, \q \im A^2B, \q \im ACB, \q \im CAB, \q \im C^2B, \q \ldots.
$$
Although this gives a stochastic analogue of Kalman's rank condition, a corresponding
Hautus-type test is not immediate. 
Indeed, the usual eigenvector argument for deterministic systems must be replaced by 
an eigenmatrix argument involving a Lyapunov-type operator. 
More importantly, the relevant eigenmatrices are not arbitrary symmetric eigenmatrices, 
but positive semidefinite ones. 
This cone restriction
is a distinctive feature of the stochastic Hautus test: in the deterministic case, the
PBH condition is formulated in terms of left eigenvectors of $A$, whereas in the
present stochastic setting the necessary and sufficient tests involve positive
semidefinite eigenmatrices of a Lyapunov-type operator generated by the pair $(A,C)$. 

\ms   

The purpose of this paper is to develop a Hautus-type theory for the backward-structured
stochastic system \eqref{SDE:state}, covering both exact controllability and
stabilizability. The main contributions of this paper, presented in the order in which
they appear in the paper, are summarized as follows.
\begin{enumerate}[(i)]
\item We first show that Peng's Kalman-type rank condition, although expressed through
infinitely many products generated by $A$ and $C$, has a finite and computable core.
More precisely, in \autoref{prop:V(n-1)=Vn}, we prove that the corresponding reachable
subspaces stabilize after at most $n-1$ steps. Consequently, Peng's infinite rank
condition reduces to a finite one. Together with the controllability Gramian, this gives
in \autoref{prop:controllability-kehua} several equivalent characterizations of exact
controllability and exact null-controllability. A notable consequence is that exact
controllability is independent of the time horizon, despite the adapted stochastic
nature of the terminal target.

\item This finite-dimensional reachable subspace further leads to a controllability
decomposition for backward-structured SDEs; see \autoref{thm:control-decomposition}.
After a suitable orthogonal change of coordinates, the system can be separated into
its controllable and uncontrollable components. This is the stochastic counterpart of
the classical Kalman controllability decomposition, but with an essential difference:
the invariant subspace is generated jointly by the drift matrix $A$ and the
diffusion-related matrix $C$, rather than by powers of $A$ alone.

\item Our first main Hautus-type result is the exact controllability criterion in
\autoref{thm:H-test-controllability}. To state it, let $\dbS^n$ denote the space of
real symmetric $n\times n$ matrices, and let $\bar\dbS^n_+$ denote the cone of
positive semidefinite matrices in $\dbS^n$. We introduce the Lyapunov-type operator
$\cL_{\scp(A,C)}:\dbS^n\to\dbS^n$ by
$$
\cL_{\scp(A,C)}(M)=MA+A^\top M+C^\top MC,\q M\in\dbS^n.
$$
We prove that system \eqref{SDE:state} is exactly controllable if and only if there is 
no nonzero \emph{positive semidefinite} eigenmatrix $H$ of $\cL_{\scp(-A,C)}$ satisfying
$B^\top H=0$.

\ms 

This result identifies the sharp stochastic analogue of the deterministic PBH
obstruction. In the classical theory, one tests left eigenvectors of the system matrix.
For system \eqref{SDE:state}, however, the correct obstructions are positive semidefinite
eigenmatrices of a Lyapunov-type operator. This is not just a change of language:
the cone restriction is essential. As shown in \autoref{example}, the stronger condition 
excluding all symmetric eigenmatrices $H\in\dbS^n$ with $B^\top H=0$ is sufficient but not
necessary for exact controllability. Thus the cone $\bar\dbS^n_+$ is the sharp
testing class for the stochastic Hautus criterion.

\item Our second main Hautus-type result is the stabilizability criterion in
\autoref{thm:H-test-stability}. Since the standard notions of $L^2$-, exponential
mean-square, and asymptotic mean-square stabilizability are equivalent in the present
setting, we simply speak of stabilizability. This result is more delicate than the
exact controllability criterion. Its proof requires three intermediate ingredients:
a cone-duality characterization for Lyapunov-type inequalities in
\autoref{prop:P>0+LP>0}, a complete treatment of the uncontrolled case $B=0$ in
\autoref{prop:stabilizable:B=0}, and a reduction of stabilizability to the
uncontrollable subsystem under the controllability decomposition in
\autoref{prop:stable-decomposition}.

Combining these ingredients, we prove that system \eqref{SDE:state} is stabilizable 
if and only if, for every $\l\les0$ and every $H\in\bar\dbS^n_+$,
$$
\cL_{\scp(-A,C)}(H)=\l H,\q B^\top H=0  \q\Longrightarrow\q H=0.
$$
This theorem reveals the main mechanism behind stochastic stabilizability. In
deterministic linear control, stabilizability is obtained by testing the PBH condition
only on the unstable eigenvalues of $A$. Here, the matrix $A$ itself is no longer the
right spectral object. Stabilizability is governed by the nonstable spectral part of
the Lyapunov-type operator $\cL_{\scp(-A,C)}$, but only inside the positive
semidefinite cone. Thus the criterion contains two simultaneous restrictions: a
spectral restriction and a cone restriction. This double restriction is the key new
feature of the stochastic PBH theory developed in this paper. As a byproduct,
\autoref{prop:control-stable} shows that exact controllability implies stabilizability.
\end{enumerate}

The rest of the paper is organized as follows. 
Section~2 collects the notation and preliminary results on stability and stabilizability. 
Section~3 is devoted to exact controllability of system \rf{SDE:state}. 
Section~4 establishes the Hautus-type characterization of stabilizability.

\section{Preliminaries}  

In this section, we introduce the notation used throughout the paper and recall the notions of controllability 
and stabilizability together with some related basic results.

\ms

Let $\dbR$ and $\dbC$ denote the sets of real and complex numbers, respectively, and set
$$
\dbC^- \deq \{z\in\dbC \mid \Re z<0\}.
$$
For integers $n,m\ges1$, let $\dbR^{n\times m}$ be the space of real $n\times m$ matrices, 
endowed with the Frobenius inner product
$$
\lan M,N\ran \deq \tr(M^\top N), \q M,N\in\dbR^{n\times m}
$$
and the associated norm $|\cdot|$. Here and throughout, $M^\top$ denotes the transpose of $M$, 
and $\tr(\cdot)$ the trace.
Let $\dbS^n\subseteq \dbR^{n\times n}$ denote the subspace of symmetric matrices. 
We write $\dbS^n_+$ and $\bar{\dbS}^n_+$ for the sets of positive definite and positive semidefinite 
matrices in $\dbS^n$, respectively. 
For $M,N\in\dbS^n$, the notation $M\ges N$ (resp., $M>N$) means that $M-N$ is positive semidefinite 
(resp., positive definite).
The identity matrix in $\dbR^{n\times n}$ is denoted by $I_n$, or simply by $I$ when no confusion can arise. 
For any matrix $M$, we denote by $\im M$ and $\ker M$ the image and kernel of $M$, respectively.
All vectors are taken to be column vectors unless stated otherwise.

\ms  

Recall the probability space $(\Om,\sF,\dbP)$ and the filtration $\dbF=\{\sF_t\}_{t\ges0}$. 
For a random vector $\xi$, we write $\xi\in\sF_t$ to mean that $\xi$ is $\sF_t$-measurable. 
For a stochastic process $X$, we write $X\in\dbF$ if $X$ is progressively measurable with respect to $\dbF$. 
We also introduce the following spaces:
\begin{align*}
C([0,T];\dbS^n) &\deq\big\{u:[0,T]\to\dbS^n \mid u \text{ is continuous on } [0,T]\big\}, \\
L^2_{\sF_T}(\Om;\dbR^n)&\deq\big\{\xi:\Om\to\dbR^n \mid \xi\in\sF_T, \dbE|\xi|^2<\i\big\}, \\
L^2_\dbF(0,T;\dbR^m)&\deq\lt\{u:[0,T]\times\Om\to\dbR^m \,\middle|\, u\in\dbF, \dbE\int_0^T|u(t)|^2dt<\i\rt\}, \\
L^2_\dbF(0,\i;\dbR^m)&\deq\lt\{u:[0,\i)\times\Om\to\dbR^m \,\middle|\, u\in\dbF, \dbE\int_0^\i|u(t)|^2dt<\i\rt\}. 
\end{align*}

\ss

Although our main focus is on the backward-structured SDE \rf{SDE:state}, 
it is convenient to recall the notions of (exact) controllability and stabilizability 
for the general linear stochastic control system
\begin{equation}\label{sys:ACBD}\lt\{\begin{aligned}
dX(t) &= [AX(t)+Bu(t)]dt + [CX(t)+Du(t)]dW(t), \q t\ges0, \\
 X(0) &= x,
\end{aligned}\rt.\end{equation}
which will be denoted by $[A,C;B,D]$, where
$$ 
A,C\in\dbR^{n\times n}, \q  B,D\in\dbR^{n\times m}.  
$$
When $B=D=0$, the above system reduces to the uncontrolled system
$$
\lt\{\begin{aligned}
dX(t) &= AX(t)dt + CX(t)dW(t), \q t\ges 0,\\
X(0) &= x,
\end{aligned}\rt.
$$
which is simply denoted by $[A,C]$.

\begin{definition}
The system $[A,C;B,D]$ is said to be
\begin{enumerate}[(i)]
\item \emph{exactly controllable} on $[0,T]$ if, for every $(x,\xi)\in\dbR^n\times L_{\sF_T}^2(\Om;\dbR^n)$,
      there exists $u(\cd)\in L^2_\dbF(0,T;\dbR^m)$ such that  
      $$
      X(T;x,u(\cd)) = \xi \q\as;  
      $$
\item \emph{exactly null-controllable} on $[0,T]$ if, for every $x\in\dbR^n$, there exists 
      $u(\cd)\in L^2_\dbF(0,T;\dbR^m)$ such that   
      $$
      X(T;x,u(\cd)) = 0 \q\as  
      $$
\end{enumerate}
\end{definition}

\begin{definition} 
The system $[A,C]$ is said to be 
\begin{enumerate}[(i)]
\item \emph{exponentially mean-square stable} if there exist constants $M\ges 1$ and $\l>0$ 
      such that the solution $X(\cd\,;x)$ with initial state $x$ satisfies
      $$
      \dbE |X(t;x)|^2 \les M e^{-\l t}|x|^2, \q\forall t\ges 0,~\forall x\in\dbR^n.
      $$
\item \emph{$L^2$-stable} if the solution $X(\cd\,;x)$ with initial state $x$ satisfies
      $$
      \dbE\int_0^\i |X(t;x)|^2dt<\i,\q \forall x\in\dbR^n.
      $$
\item \emph{asymptotically mean-square stable} if the solution $X(\cd\,;x)$ with initial state $x$ satisfies
      $$
      \lim_{t\to\i}\dbE |X(t;x)|^2=0,\q \forall x\in\dbR^n.
      $$
\end{enumerate}
\end{definition}

\begin{definition}\label{def:stabilizability}
The system $[A,C;B,D]$ is said to be \emph{exponentially mean-square stabilizable}
(resp.\ \emph{$L^2$-stabilizable}, \emph{asymptotically mean-square stabilizable})
if there exists a matrix $\Th\in\dbR^{m\times n}$ such that the closed-loop system
$[A+B\Th,C+D\Th]$ is exponentially mean-square stable
(resp.\ $L^2$-stable, asymptotically mean-square stable).
Any such matrix $\Th$ is called a \emph{stabilizer} of system $[A,C;B,D]$.
\end{definition}

The following lemma shows that the above stability notions for the system $[A,C]$ are equivalent,
and also provides an additional characterization in terms of a Lyapunov inequality. 
Closely related mean-square stability and Lyapunov-type criteria can be found, for instance, 
in \cite{Khasminskii2012,Dragan-Morozan-Stoica2013,Huang-Li-Yong2015}.  
Since the relevant arguments are not readily available in a unified form, we include a proof 
for the reader's convenience.

\begin{lemma}\label{lmm:equi-stable}
The following statements are equivalent:
\begin{enumerate}[\rm(i)]
\item System $[A,C]$ is asymptotically mean-square stable.
\item The solution $\varPi(\cd)$ to the matrix SDE
      \begin{equation}\label{matrix-SDE}\left\{\begin{aligned}
      d\varPi(t) &= A\varPi(t)dt+C\varPi(t)dW(t), \q t\ges0, \\
       \varPi(0) &= I 
      \end{aligned}\right.\end{equation}
      satisfies $\dbE|\varPi(t)|^2 <1$ for some $t>0$.
\item System $[A,C]$ is $L^2$-stable.
\item There exists a matrix $P\in\dbS^n_+$ such that
      $$
      PA + A^{\top}P + C^{\top}PC <0. 
      $$
\item System $[A,C]$ is exponentially mean-square stable.
\end{enumerate}
Moreover, if any of the above statements holds, then for any $\L\in\dbS^n$, the Lyapunov equation
$$
PA+A^\top P+C^\top PC+\L=0
$$
admits a unique solution $P\in\dbS^n$, which is given by
\begin{equation}\label{rep:P}
P = \dbE\int_0^{\i} \varPi(t)^{\top}\L\varPi(t) dt. 
\end{equation}
\end{lemma}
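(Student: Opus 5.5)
The plan is to reduce everything to the deterministic linear matrix ODE satisfied by the second moment, and then run the cycle (v)$\Ra$(i)$\Ra$(ii)$\Ra$(v), (v)$\Ra$(iii)$\Ra$(iv)$\Ra$(v).

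\emph{Step 1 (moment equation).} Since $X(t;x)=\varPi(t)x$ by linearity and uniqueness, put $\Si(t)\deq\dbE[\varPi(t)\varPi(t)^\top]$. Itô's formula gives
$$
\dot\Si=A\Si+\Si A^\top+C\Si C^\top,\qquad \Si(0)=I .
$$
This is a linear ODE on $\dbS^n$ with generator $\cL^*(S)=AS+SA^\top+CSC^\top$. Thus $\Si(t)=e^{t\cL^*}(I)$, and $\dbE|X(t;x)|^2=x^\top\dbE[\varPi^\top\varPi]x$. The key structural fact is the semigroup property: $\varPi(t+s)=\wt\varPi(s)\varPi(t)$, where $\wt\varPi$ is an independent copy driven by the increments $W(t+\cd)-W(t)$. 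Hence
$$
\dbE|\varPi(t+s)|^2\les \dbE|\wt\varPi(s)|^2_{op}\,\dbE|\varPi(t)|^2 .
$$
This is obtained by conditioning on $\sF_t$ and using $|MN|\les|M|_{op}|N|$. It gives submultiplicativity of $\phi(t)\deq\dbE|\varPi(t)|^2$ up to the norm-equivalence constant. I will work with the operator norm $\|\varPi(t)\|^2$, or handle the constant by the iteration below.

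\emph{Step 2 (the cycle).}
\begin{itemize}
\item (v)$\Ra$(i) and (v)$\Ra$(iii) are immediate.
\item (i)$\Ra$(ii): apply (i) with $x=e_1,\dots,e_n$ and use $\dbE|\varPi(t)|^2=\sum_i\dbE|\varPi(t)e_i|^2\to0$.
\item (ii)$\Ra$(v): suppose $\dbE|\varPi(t_0)|^2=\rho<1$. By the Markov/independence property, $\dbE|\varPi(kt_0)x|^2\les\rho^k|x|^2$. To see this, condition on $\sF_{(k-1)t_0}$:
$$
\dbE\big[|\wt\varPi(t_0)y|^2\,\big|\,y\big]=y^\top\dbE[\varPi(t_0)^\top\varPi(t_0)]y\les\rho|y|^2,
$$
since $\dbE[\varPi^\top\varPi]\les \tr(\cdot)I=\rho I$. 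Combined with the Gronwall bound $\dbE|\varPi(s)x|^2\les e^{Ks}|x|^2$ on $[0,t_0]$, this gives exponential decay.
\item (iii)$\Ra$(iv): set
$$
P\deq\dbE\int_0^\i\varPi^\top\varPi\,dt ,
$$
which is finite by (iii), and clearly $P>0$. Itô's formula on $\varPi(t)^\top P\varPi(t)$ requires first establishing the Lyapunov identity. It is cleaner to argue as follows. The adjoint moment $\Ga(t)\deq\dbE[\varPi^\top\varPi]$ solves $\dot\Ga=\cL_{\scp(A,C)}(\Ga)$ with $\Ga(0)=I$, and $\int_0^\i\Ga<\i$. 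Because $\Ga\ges0$ is integrable and solves a linear ODE, $\Ga(t)\to0$ along a sequence. Integrating the ODE from $0$ to $T_k$ gives $\Ga(T_k)-I=\cL_{\scp(A,C)}(\int_0^{T_k}\Ga)$, and letting $k\to\i$ yields $\cL_{\scp(A,C)}(P)=-I<0$.
\item (iv)$\Ra$(v): apply Itô's formula to $X^\top PX$. This gives $\frac{d}{dt}\dbE X^\top PX=\dbE X^\top\cL(P)X\les-\d\,\dbE|X|^2\les-(\d/|P|)\,\dbE X^\top PX$. Gronwall then gives exponential decay, with constants from $P\ges cI$.
\end{itemize}

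\emph{Step 3 (Lyapunov equation).} Under (v), $\Ga(t)\to0$ exponentially. For given $\L$, define $P=\dbE\int_0^\i\varPi^\top\L\varPi\,dt$, which converges absolutely. Then $t\mapsto\dbE[\varPi^\top\L\varPi]$ solves the same ODE $\dot G=\cL_{\scp(A,C)}(G)$ with $G(0)=\L$. Integrating over $[0,\i)$ gives $0-\L=\cL_{\scp(A,C)}(P)$.

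For uniqueness, suppose $\cL_{\scp(A,C)}(Q)=0$. Itô's formula gives $x^\top Qx=\dbE[X(t)^\top QX(t)]\to0$, so $Q=0$. Equivalently, $\cL_{\scp(A,C)}$ is injective on the finite-dimensional space $\dbS^n$, hence bijective.

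The main obstacle is (ii)$\Ra$(v). A single time with $\dbE|\varPi(t_0)|^2<1$ must be turned into uniform geometric decay, and this requires carefully using the flow/independent-increment structure $\varPi(t+s)=\wt\varPi(s)\varPi(t)$. One must check that the Frobenius bound $\dbE[\varPi^\top\varPi]\les\rho I$ is the correct quantity to iterate. It is, since the largest eigenvalue is at most the trace, which is exactly why (ii) is phrased with the Frobenius norm.
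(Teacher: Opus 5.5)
Your proof is correct, but it is organized differently from the paper's. The paper cites Huang--Li--Yong for (iii)$\Leftrightarrow$(iv) and for the representation \eqref{rep:P}. It then proves the chain (iii)$\Rightarrow$(ii)$\Rightarrow$(i)$\Rightarrow$(iii)$\Rightarrow$(v) using essentially only the multiplicative identity \eqref{varPi(t+k)}. Its (iii)$\Rightarrow$(ii) step needs the uniform lower bound $\alpha I\les\dbE[\varPi(t)^\top\varPi(t)]$ on $[0,1]$ to turn integrability into $\dbE|\varPi(k)|^2\to0$. Its last step uses the cited solution $P>0$ of $PA+A^\top P+C^\top PC+I=0$.

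You instead close two short cycles through (v):
(i)$\Rightarrow$(ii) is a column-by-column observation.
(ii)$\Rightarrow$(v) follows in one stroke from conditional iteration on the grid $\{kt_0\}$ plus a Gronwall bound on $[0,t_0]$. This yields explicit constants $M=e^{Kt_0}/\rho$ and $\lambda=|\ln\rho|/t_0$.
(v)$\Rightarrow$(iii) is trivial.
(iii)$\Rightarrow$(iv), together with existence, uniqueness and \eqref{rep:P} for the Lyapunov equation, comes from the backward moment equation $\dot G=\cL_{\scp(A,C)}(G)$.

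What this buys you is a self-contained proof that never needs the lower bound. Your (iv)$\Rightarrow$(v) also uses only the strict inequality $\cL_{\scp(A,C)}(P)\les-\delta I$, not a solution of the Lyapunov equation. In exchange, the paper's own arguments need no differential equation for $\dbE[\varPi^\top M\varPi]$ and delegate all Lyapunov-equation theory to the reference.

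The one step you assert without proof is precisely that backward moment equation. It\^o's formula applied to $\varPi(t)^\top M\varPi(t)$ gives only $\frac{d}{dt}\dbE[\varPi(t)^\top M\varPi(t)]=\dbE[\varPi(t)^\top\cL_{\scp(A,C)}(M)\varPi(t)]$. This is not visibly $\cL_{\scp(A,C)}(\dbE[\varPi(t)^\top M\varPi(t)])$, since $\varPi(t)$ commutes with neither $A$ nor $C$.

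One line fixes it. The linear map $\mathcal{G}_t:M\mapsto\dbE[\varPi(t)^\top M\varPi(t)]$ on $\dbS^n$ satisfies $\frac{d}{dt}\mathcal{G}_t=\mathcal{G}_t\circ\cL_{\scp(A,C)}$ with $\mathcal{G}_0=\mathrm{id}$. Hence $\mathcal{G}_t=e^{t\cL_{\scp(A,C)}}$, which commutes with $\cL_{\scp(A,C)}$. Equivalently, differentiate the flow identity $G(t+h)=\dbE[\varPi(h)^\top G(t)\varPi(h)]$ at $h=0$, or dualize \autoref{lmm:Tt(S)}. With this added, both your (iii)$\Rightarrow$(iv) and your Step 3 go through as written.
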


\begin{proof}
The equivalence of (iii) and (iv), together with the representation \rf{rep:P}, was established 
in \cite[Proposition 3.5]{Huang-Li-Yong2015}. The implication (v) $\Ra$ (iii) is immediate.
Thus, it remains only to prove (iii) $\Ra$ (ii) $\Ra$ (i) $\Ra$ (iii) $\Ra$ (v).

\ms 

For (iii) $\Ra$ (ii), we observe that for any integer $k\ges1$ and any $t\ges0$,
$\G_k(t)\deq\varPi(t+k)\varPi(k)^{-1}$ has the same distribution as $\varPi(t)$,
and is independent of $\varPi(k)$. Thus,
\begin{align}\label{varPi(t+k)}
\dbE|\varPi(t+k)|^2
&= \dbE\[\tr\(\varPi(k)^\top\G_k(t)^\top\G_k(t)\varPi(k)\)\]
= \tr\(\dbE\[\varPi(k)^\top\G_k(t)^\top\G_k(t)\varPi(k)\]\) \nn\\
&= \tr\(\dbE\[\varPi(k)^\top\dbE[\G_k(t)^\top\G_k(t)]\varPi(k)\]\)
= \tr\(\dbE\[\varPi(k)^\top\dbE[\varPi(t)^\top\varPi(t)]\varPi(k)\]\) \nn\\
&= \dbE\[\tr\(\varPi(k)^\top\dbE[\varPi(t)^\top\varPi(t)]\varPi(k)\)\].
\end{align}
Since $t\mapsto\dbE[\varPi(t)^\top\varPi(t)]$ is continuous and
$\dbE[\varPi(t)^\top\varPi(t)]\in\dbS^n_+$ for every $t\ges0$, 
there exist constants $\b,\a>0$ such that 
\begin{equation}\label{26-03-30:1}
\a I\les \dbE[\varPi(t)^\top\varPi(t)]\les \b I, \q\forall t\in[0,1].
\end{equation}
Then, using the $L^2$-stability of $[A,C]$, together with \rf{varPi(t+k)}, we obtain 
\begin{align*}
\i>\dbE\int_0^\i|\varPi(t)|^2dt
&= \sum^\i_{k=0}\int_{k}^{k+1}\dbE|\varPi(t)|^2dt
= \sum^\i_{k=0}\int_0^1\dbE|\varPi(t+k)|^2dt \\
&= \sum^\i_{k=0}\int_0^1\dbE\[\tr\(\varPi(k)^\top\dbE[\varPi(t)^\top\varPi(t)]\varPi(k)\)\]dt \\
&\ges \a\sum^\i_{k=0}\int_0^1\dbE\[\tr\(\varPi(k)^\top\varPi(k)\)\]dt 
=\a\sum^\i_{k=0}\dbE|\varPi(k)|^2.
\end{align*}
Therefore, $\dbE|\varPi(k)|^2\to0$ as $k\to\i$.

\ms 

For (ii) $\Ra$ (i), we may assume without loss of generality that $\dbE|\varPi(1)|^2<1$.
Taking $t=1$ in \rf{varPi(t+k)} and iterating, we obtain
\begin{align*}
\dbE|\varPi(1+k)|^2
&\les \dbE|\varPi(1)|^2\,\dbE\[\tr\(\varPi(k)^\top\varPi(k)\)\] 
=\dbE|\varPi(1)|^2\,\dbE|\varPi(k)|^2 \les \[\dbE|\varPi(1)|^2\]^{k+1}.
\end{align*}
Thus, for any $\e>0$, we can find an integer $N\ges1$ such that
$$
\dbE|\varPi(k)|^2 \les \b^{-1}\e, \q \forall k\ges N,
$$
where $\b>0$ is the constant in \rf{26-03-30:1}. Then, for any $t\ges N$, letting
$$
k\deq \lfloor t\rfloor, \q s\deq t-k\in[0,1],
$$
we have by \rf{varPi(t+k)} that 
\begin{align*}
\dbE|\varPi(t)|^2 
   = \dbE|\varPi(s+k)|^2 
   = \dbE\[\tr\(\varPi(k)^\top \dbE\[\varPi(s)^\top\varPi(s)\]\varPi(k)\)\] 
\les \b\dbE|\varPi(k)|^2 
\les \e.
\end{align*}
Hence, $\lim_{t\to\i}\dbE|\varPi(t)|^2=0$, 
and thus system $[A,C]$ is asymptotically mean-square stable.

\ms  

For (i) $\Ra$ (iii), choose an integer $k\ges1$ such that $\dbE|\varPi(k)|^2<1$. 
For any $t\in[0,k]$ and any integer $h\ges0$, by \rf{varPi(t+k)} we have
$$
\dbE|\varPi(t+hk)|^2
   = \dbE\[\tr\(\varPi(hk)^\top \dbE[\varPi(t)^\top\varPi(t)]\varPi(hk)\)\]
\les \dbE|\varPi(t)|^2\,\dbE|\varPi(hk)|^2.
$$
Moreover, taking $t=k$ in the above and iterating, we obtain
$$
\dbE|\varPi(hk)|^2 \les \[\dbE|\varPi(k)|^2\]^h.
$$
Consequently,
\begin{align*}
\dbE\int_0^\i |\varPi(t)|^2dt
&= \sum_{h=0}^\i \int_{hk}^{(h+1)k}\dbE|\varPi(t)|^2dt
 = \sum_{h=0}^\i \int_0^k \dbE|\varPi(t+hk)|^2dt \\
&\les \sum_{h=0}^\i \int_0^k \dbE|\varPi(t)|^2\,\[\dbE|\varPi(k)|^2\]^hdt 
    = \lt(\int_0^k \dbE|\varPi(t)|^2dt\rt)\sum_{h=0}^\i \[\dbE|\varPi(k)|^2\]^h <\i.
\end{align*}
Thus, system $[A,C]$ is $L^2$-stable. 

\ms  

For (iii) $\Ra$ (v), or equivalently, for (iv) $\Ra$ (v), let $P>0$ be such that
$$
PA+A^\top P+C^\top PC+I=0,
$$
and let $\d_1,\d_2>0$ be such that $\d_1 I \les P \les \d_2 I$.
Let $X(\cd\,;x)$ be the solution of system $[A,C]$ with initial state $x\in\dbR^n$.
By It\^{o}'s rule,
\begin{align*}
{d\over dt}\dbE[X(t;x)^\top PX(t;x)]
&= \dbE\[X(t;x)^\top(PA + A^\top P + C^\top PC)X(t;x)\] = -\dbE|X(t;x)|^2 \\
&\les -\delta_2^{-1}\dbE[X(t;x)^\top PX(t;x)], \q\forall t\ges0.
\end{align*}
Hence, by Gronwall's inequality,
$$
\dbE[X(t;x)^\top PX(t;x)] \les x^\top Px\, e^{-t/\d_2}, \q\forall t\ges0.
$$
Consequently,
$$
\d_1 \dbE|X(t;x)|^2
\les \dbE[X(t;x)^\top PX(t;x)]
\les x^\top Px\, e^{-t/\d_2}
\les \d_2 |x|^2 e^{-t/\d_2}, \q\forall t\ges0,\ \forall x\in\dbR^n,
$$
which shows that system $[A,C]$ is exponentially mean-square stable. 
\end{proof}

We next recall a characterization of $L^2$-stabilizability for the controlled system $[A,C;B,D]$; 
see \cite[pp.~71--73]{Sun-Yong2020book-a} and \cite[pp.~201--204]{Sun-Yong2024JDE} for the proof.

\begin{lemma}\label{lmm:ACBD}
Let $M\in\dbS^n_+$ and $N\in\dbS^m_+$. The following statements are equivalent:
\begin{enumerate}[\rm (i)]
\item System $[A,C;B,D]$ is  $L^2$-stabilizable. 
\item The algebraic Riccati equation
      \begin{equation}\label{ACBD:Ric}
        PA + A^\top P + C^\top PC + M -(PB+C^\top PD)(N+D^\top PD)^{-1}(B^\top P+D^\top PC) = 0
      \end{equation}
      admits a solution $P\in\dbS^n_+$. 
\item For every $x\in\dbR^n$, there exists a control process $u(\cd)\in L^2_\dbF(0,\i;\dbR^m)$ such that 
      $$
      \dbE\int_0^\i|X(t;x,u(\cd))|^2dt<\i. 
      $$
\end{enumerate}
In this case, the positive definite solution $P$ to \rf{ACBD:Ric} is unique, and  
\begin{equation}\label{def:Th}
\Th \deq -(N+D^\top PD)^{-1}(B^\top P+D^\top PC)
\end{equation}
is a stabilizer of system $[A,C;B,D]$.   
\end{lemma}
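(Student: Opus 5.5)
We prove the cycle (ii) $\Ra$ (iii) $\Ra$ (i) $\Ra$ (ii). Throughout we use the standard LQ machinery: \autoref{lmm:equi-stable}, It\^o's rule for quadratic functionals, and the monotonicity of Riccati solutions. Two of the implications are routine; the real work is in producing a solution of \rf{ACBD:Ric} from open-loop $L^2$-stabilizability.

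\textbf{(ii) $\Ra$ (iii).} Let $P\in\dbS^n_+$ solve \rf{ACBD:Ric} and define $\Th$ by \rf{def:Th}. Completing the square, \rf{ACBD:Ric} can be rewritten as
$$
P(A+B\Th)+(A+B\Th)^\top P+(C+D\Th)^\top P(C+D\Th)+M+\Th^\top N\Th=0 .
$$
Since $M+\Th^\top N\Th>0$, the closed-loop Lyapunov operator is strictly negative at the positive definite $P$. Statement (iv) of \autoref{lmm:equi-stable} then shows that $[A+B\Th,C+D\Th]$ is stable. Hence $\Th$ is a stabilizer, and the feedback control $u=\Th X$ lies in $L^2_\dbF(0,\i;\dbR^m)$, which gives (iii).

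\textbf{(i) $\Ra$ (ii) and (iii) $\Ra$ (i).}
\begin{itemize}
\item (i) $\Ra$ (ii): a stabilizer $\Th_0$ trivially gives admissible controls $u=\Th_0X$ for every $x$.
\item (iii) $\Ra$ (i): by hypothesis, for every $x$ the value
$$
V(x)=\inf_u \dbE\int_0^\i\big[X^\top MX+u^\top Nu\big]dt
$$
is finite. Since $V$ is a nonnegative quadratic form, $V(x)=x^\top Px$ for some $P\ges0$. Moreover $V(x)\ges c|x|^2$ with $c>0$, by a short-time estimate using $M>0$, so $P>0$.
\end{itemize}
To identify $P$ as a solution of \rf{ACBD:Ric}, we approximate by finite horizons. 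Let $P_T(0)$ be the value matrix of the problem on $[0,T]$. It is given by the solution of the differential Riccati equation with terminal value $0$, which is globally solvable because $N+D^\top P_TD\ges N>0$. The family $P_T(0)$ is nondecreasing in $T$ and bounded above by $P$, since finite-horizon costs are dominated by infinite-horizon costs. Hence $P_T(0)\uparrow\bar P\les P$. By time invariance, $\bar P$ is an equilibrium of the autonomous Riccati flow, so it solves \rf{ACBD:Ric}, and $\bar P\ges P_1(0)>0$.

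\textbf{Main obstacle.} The delicate point is that $\bar P$ need not be obtained merely as a limit: we must show it is stabilizing. This closes the cycle, since then (ii) holds with $\bar P$ and (i) follows from the first step. With $\Th$ built from $\bar P$, the same completed-square identity together with $\bar P>0$ and $M+\Th^\top N\Th>0$ gives stability of the closed loop by \autoref{lmm:equi-stable}(iv). So in fact any positive definite solution is automatically stabilizing, because $M>0$. This is what makes the argument close.

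\textbf{Uniqueness.} Let $P_1,P_2>0$ both solve \rf{ACBD:Ric}, with associated stabilizers. Apply It\^o's rule to $X^\top P_iX$ along arbitrary $L^2$-stable controls and complete the square. Because $X(t)\to0$ in mean square, boundary terms vanish, and this yields
$$
x^\top P_ix=\inf_u \dbE\int_0^\i\big[X^\top MX+u^\top Nu\big]dt .
$$
Hence $P_1=P_2$. The same verification argument identifies $\bar P=P$.
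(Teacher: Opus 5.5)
Your argument is correct. The paper gives no proof of its own and cites Sun--Yong, and your route is essentially the standard proof there: approximate by the finite-horizon Riccati equation with zero terminal value, use (iii) to bound the nondecreasing family $P_T(0)$, pass to a positive definite equilibrium of the ARE, then use the completed-square identity with \autoref{lmm:equi-stable}(iv) for stabilization and a verification argument for uniqueness. Only the bookkeeping needs fixing: your bullets actually prove (i)$\Ra$(iii) and (iii)$\Ra$(ii), so the executed cycle is (ii)$\Ra$(i)$\Ra$(iii)$\Ra$(ii). Also, the claims that $V$ is a quadratic form with $V(x)\ges c|x|^2$ are unnecessary, since $x^\top P_T(0)x\les V(x)<\i$ and $\bar P\ges P_1(0)>0$ already suffice.
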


\section{Exact controllability} 

In this section, we study the backward-structured SDE \rf{SDE:state}, 
with primary emphasis on deriving a Hautus-type characterization; 
see \autoref{thm:H-test-controllability}. 
We also establish a corresponding controllability decomposition in 
\autoref{thm:control-decomposition}, and derive in 
\autoref{prop:controllability-kehua} several equivalent characterizations 
of exact controllability, refining some related known results. 

\ms  

Let $\F(\cd)=\{\F(t);0\les t<\i\}$ be the solution to the matrix SDE
\begin{equation}\label{eqn:Phi}\left\{\begin{aligned}
d\F(t)  &= -\F(t)Adt-\F(t)CdW(t), \q t\ges0, \\
 \F(0)  &= I,
\end{aligned}\right.\end{equation}
and define 
\begin{equation}\label{Grammian}
G(T) \deq \dbE\lt[\int_0^T\F(t)BB^\top\F(t)^\top dt\rt] \in \bar\dbS^n_+.
\end{equation} 
We call $G(T)$ the {\it controllability Gramian} of system \rf{SDE:state} over $[0,T]$.
Recall that, for any matrix $M$, $\im M$ and $\ker M$ denote its image and kernel, respectively.
The following lemma, adapted from Bi--Sun--Xiong \cite{Bi-Sun-Xiong2020}, collects two useful 
facts on controllability and the Gramian $G(T)$.

\begin{lemma}\label{lmm:controllability-Gramian}
The following statements hold:
\begin{enumerate}[\rm(i)]
\item For any initial-terminal state pair $(x,\xi)\in\dbR^n\times L_{\sF_T}^2(\Om;\dbR^n)$, 
      there exists a control
      $(u(\cd),z(\cd))\in L^2_\dbF(0,T;\dbR^m)\times L^2_\dbF(0,T;\dbR^n)$ such that
      $$
      X(T;x,u(\cd),z(\cd))=\xi \q\as
      $$
      if and only if $x-\dbE[\F(T)\xi]\in\im G(T)$.
\item The solution $P:[0,T]\to\dbS^n$ to the linear matrix ODE
      \begin{equation}\label{G(T):formular}\left\{\begin{aligned}
      & \dot P(t) - AP(t)- P(t)A^\top + CP(t)C^\top + BB^\top = 0, \q t\in[0,T],\\
      & P(T) = 0
      \end{aligned}\right.
      \end{equation}
      is given by
      \begin{equation}\label{formula:P-Gramian}
      P(t) = \dbE\int_t^T\F(t)^{-1}\F(s)BB^\top[\F(t)^{-1}\F(s)]^\top ds, \q t\in[0,T].
      \end{equation}
      In particular,
      $$
      G(T)=P(0), \q P(t)\ges0,\ \forall t\in[0,T].
      $$
\end{enumerate}
\end{lemma}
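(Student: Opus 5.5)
The plan for part (i) is to first derive a variation-of-constants identity linking $X(0)$ and $X(T)$ through $\F$, and then to reduce reachability to a range condition on $G(T)$. For any $(u,z)$, apply It\^o's rule to $\F(t)X(t)$. The cross-variation term is $-\F(t)Cz(t)\,dt$, which cancels the drift contribution $\F Cz$. The terms $\mp\F AX$ also cancel, so
$$
d\big(\F(t)X(t)\big)=\F(t)Bu(t)\,dt+\F(t)\big(z(t)-CX(t)\big)\,dW(t).
$$
Since $\F$ solves a linear SDE with constant coefficients, $\sup_{t\les T}|\F(t)|$ has moments of all orders. Combined with $X\in L^2$ in sup-norm, this gives $\F X\in L^1$ uniformly and lets us show, after a standard localization, that the stochastic integral has zero mean. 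Taking expectations then yields
$$
\dbE[\F(T)X(T)] = X(0)+\dbE\int_0^T\F(t)Bu(t)\,dt .
$$

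For necessity, suppose $X(0)=x$ and $X(T)=\xi$, so that $x-\dbE[\F(T)\xi]=-\dbE\int_0^T\F Bu\,dt$. Take any $\eta\in\ker G(T)$. Then $0=\eta^\top G(T)\eta=\dbE\int_0^T|B^\top\F^\top\eta|^2dt$, so $B^\top\F(t)^\top\eta=0$ a.e. Hence $\eta^\top\dbE\int_0^T\F Bu\,dt=0$, and the vector lies in $(\ker G(T))^\perp=\im G(T)$.

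For sufficiency, choose $\eta$ with $G(T)\eta=\dbE[\F(T)\xi]-x$ and set $u(t)\deq -B^\top\F(t)^\top\eta$; this is in $L^2_\dbF$ by the moment bounds on $\F$. I would not build $z$ through martingale representation, since products like $\F^{-1}\zeta$ cause integrability trouble. Instead, let $(X,z)$ be the unique $L^2$ adapted solution of the linear BSDE
$$
dX=(AX+Bu+Cz)\,dt+z\,dW,\qquad X(T)=\xi,
$$
which has Lipschitz coefficients. Then $X(0)$ is deterministic, and the identity gives $X(0)=\dbE[\F(T)\xi]+G(T)\eta=x$, as required.

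For part (ii), I would set $\Psi_r(M)\deq\dbE[\F(r)M\F(r)^\top]$ for $M\in\dbS^n$. It\^o's rule gives
$$
d(\F M\F^\top)=\F\big(-AM-MA^\top+CMC^\top\big)\F^\top dr+\text{mart}.
$$
Hence $\frac{d}{dr}\Psi_r(M)=\Psi_r(\cL^*M)$ with $\cL^*M\deq -AM-MA^\top+CMC^\top$, so $\Psi_r=e^{r\cL^*}$, which commutes with $\cL^*$. Next, for $s\ges t$ the matrix $\F(t)^{-1}\F(s)$ satisfies the same SDE started at time $t$. It is therefore independent of $\sF_t$ and has the law of $\F(s-t)$. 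Consequently the right-hand side of \eqref{formula:P-Gramian} equals $\int_0^{T-t}e^{r\cL^*}BB^\top dr=G(T-t)$.

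Differentiating in $t$ gives
$$
\dot P(t)=-e^{(T-t)\cL^*}BB^\top=-BB^\top-\cL^*P(t),
$$
using $\frac{d}{d\tau}\int_0^\tau e^{r\cL^*}dr=I+\cL^*\int_0^\tau e^{r\cL^*}dr$. This is exactly \eqref{G(T):formular}, and clearly $P(T)=0$. The linear ODE has a unique solution, so the formula identifies it. Finally, $G(T)=P(0)$ and $P(t)\ges0$ are immediate from the integral representation.

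The main obstacle I expect is the integrability bookkeeping in (i): justifying that the stochastic integral is a true martingale and that $u\in L^2_\dbF$. Both follow from the all-order moment bounds on $\F$ and from $X\in L^2_\dbF$ with a square-integrable supremum.
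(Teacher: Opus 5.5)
The paper does not prove this lemma itself; it imports it from Bi--Sun--Xiong \cite{Bi-Sun-Xiong2020}. Your argument is therefore a self-contained replacement. It follows the natural duality route:
\begin{itemize}
\item the It\^o identity $d(\F X)=\F Bu\,dt+\F(z-CX)\,dW$;
\item for necessity, the relation $\im G(T)=(\ker G(T))^\perp$;
\item for sufficiency, the explicit control $u=-B^\top\F^\top\eta$ combined with the linear BSDE that produces $(X,z)$;
\item for (ii), the representation $\dbE[\F(r)M\F(r)^\top]=e^{r\cL^*}M$ with $\cL^*M=-AM-MA^\top+CMC^\top$, together with the flow property of $\F(t)^{-1}\F(s)$.
\end{itemize}

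The integrability steps are sound. The localization works because $\sup_t|\F(t)X(t)|$ is integrable by Cauchy--Schwarz. $X(0)$ is deterministic because $\sF_0$ is trivial up to null sets.

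There is one sign slip in the sufficiency step. With $u=-B^\top\F^\top\eta$ you correctly get $X(0)=\dbE[\F(T)\xi]+G(T)\eta$. Your choice $G(T)\eta=\dbE[\F(T)\xi]-x$ then gives $X(0)=2\dbE[\F(T)\xi]-x$, not $x$. Instead choose $\eta$ with $G(T)\eta=x-\dbE[\F(T)\xi]$, which is exactly the hypothesis; equivalently, flip the sign of $u$.

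Your semigroup treatment of (ii) mirrors \autoref{lmm:Tt(S)}, which gives the same representation for $\varPi$ and $\cL_{\scp(A,C)}^*$.
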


For an integer $k\ges0$, a \emph{word of length $k$ over $\{A,C\}$} is a matrix of the form
$$
M = X_1 X_2 \cdots X_k, \q X_i\in\{A,C\}, ~ i=1,\dots,k.
$$
When $k=0$, the empty product is understood as the identity matrix $I$.
Let $\cW_k(A,C)$ denote the set of all words of length at most $k$ over $\{A,C\}$, and define
\begin{align}
V_k  &\deq \Span\big\{\im(MB): M\in\cW_k(A,C)\big\},          \label{def:Vk}\\
V_\i &\deq \Span\big\{\im(MB): M\in\cW_k(A,C),~k\ges0\big\}. \label{def:Vi}
\end{align}
Peng \cite[Theorem 3.2]{Peng1994} showed that the backward-structured SDE \rf{SDE:state}
is exactly controllable on $[0,T]$ if and only if
$$
\dim V_\i = n.
$$
The next proposition shows that the sequence $\{V_k\}_{k\ges0}$ stabilizes after at most
$n-1$ steps. Hence the above condition is equivalent to
$$
\dim V_{n-1}=n.
$$

\begin{proposition}\label{prop:V(n-1)=Vn}
There exists $k\les n-1$ such that
$$
V_k = V_{k+1} = V_{k+2} = \cdots. 
$$
\end{proposition}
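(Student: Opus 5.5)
The argument rests on two facts: a recursion for $V_{k+1}$ in terms of $V_k$, and a dimension count. I first record the recursion
$$
V_{k+1} = V_0 + AV_k + CV_k, \q k\ges0,
$$
where $V_0=\im B$. This holds because every word of length at most $k+1$ is either the empty word or has the form $XM$ with $X\in\{A,C\}$ and $M\in\cW_k(A,C)$. Conversely, every such $XM$ and the empty word lie in $\cW_{k+1}(A,C)$. Taking spans of the images $\im(MB)$ gives both inclusions. From the definitions we also have $V_0\subseteq V_1\subseteq V_2\subseteq\cdots\subseteq\dbR^n$, since $\cW_k\subseteq\cW_{k+1}$.

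Next I show that once the chain stalls, it stays constant forever. Suppose $V_k=V_{k+1}$ for some $k$. The recursion then gives
$$
V_{k+2}=V_0+AV_{k+1}+CV_{k+1}=V_0+AV_k+CV_k=V_{k+1}.
$$
Induction yields $V_k=V_{k+j}$ for all $j\ges0$, and hence $V_k=V_\i$.

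It remains to see that a stall occurs at some $k\les n-1$. If $V_0=\{0\}$ (that is, $B=0$), then every $V_j$ is $\{0\}$ and $k=0$ works. Otherwise $\dim V_0\ges1$. Each strict inclusion $V_j\subsetneq V_{j+1}$ raises the dimension by at least one, and the dimension can never exceed $n$. So among $V_0\subseteq V_1\subseteq\cdots\subseteq V_n$ there can be at most $n-1$ strict inclusions, while there are $n$ consecutive pairs $(V_j,V_{j+1})$ with $0\les j\les n-1$. Some pair with $j\les n-1$ must therefore be an equality, and taking $k=j$ finishes the proof.

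The only step needing care is the recursion. The point is that every word factors as a letter times a shorter word, so $V_{k+1}$ depends only on $V_k$; this holds even though $A$ and $C$ do not commute. Once the recursion is in place, the rest is a routine dimension count.
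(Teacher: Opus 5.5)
Your proof is correct and follows the same route as the paper. Both show that a stall $V_k=V_{k+1}$ propagates to all later indices, then use $1\les\dim V_0$ and $\dim V_n\les n$ to force a stall at some $k\les n-1$. Your recursion $V_{k+1}=V_0+AV_k+CV_k$ is a cleaner packaging of the paper's step, where each $N\in\cW_{k+2}(A,C)$ is written as $SM$ with $S\in\{A,C\}$; your version also explicitly covers the empty word, which the paper passes over.
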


\begin{proof} 
The conclusion is trivial if $B=0$, so assume $B\neq0$. Then
$$
V_0\subseteq V_1\subseteq V_2\subseteq \cdots \subseteq \dbR^n 
\q\text{and}\q 
1\les d_0\les d_1\les\cdots\les n,
$$
where $d_k\deq\dim V_k$.  
We note that if $V_{k+1}=V_k$ for some $k\ges0$, then $V_{k+2}=V_{k+1}$. Indeed,
for any $N\in\cW_{k+2}(A,C)$, writing $N=SM$ with $S\in\{A,C\}$ and
$M\in\cW_{k+1}(A,C)$, we have  
$$
MBx\in V_{k+1}=V_k, \q\forall x\in\dbR^m. 
$$
Thus, there exist finitely many $\a_i\in\dbR$, $M_i\in\cW_k(A,C)$, and $u_i\in\dbR^m$ such that 
\begin{align*}
NBx=SMBx= S\(\sum_i \a_i M_iBu_i\)=\sum_i \a_i SM_iBu_i\in V_{k+1}.
\end{align*}
Hence $V_{k+2}\subseteq V_{k+1}$, and therefore $V_{k+2}=V_{k+1}$.
If $V_k\subsetneq V_{k+1}$ for all $k=0,1,\dots,n-1$, then
$$
1\les d_0<d_1<\cdots<d_n,
$$
which implies $d_n\ges n+1$, a contradiction. Therefore the result follows. 
\end{proof}

Combining the above results, we obtain the following equivalent characterizations 
of exact controllability.

\begin{proposition}\label{prop:controllability-kehua} 
The following statements are equivalent:
\begin{enumerate}[\rm(i)]
\item System \rf{SDE:state} is exactly controllable on $[0,T]$.
\item System \rf{SDE:state} is exactly null-controllable on $[0,T]$.
\item The controllability Gramian $G(T)$ is positive definite. 
\item $\dim V_{n-1}=n$. 
\end{enumerate}
\end{proposition}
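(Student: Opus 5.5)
We will prove the cycle of implications (i) $\Ra$ (ii) $\Ra$ (iii) $\Ra$ (i), and then the equivalence (iii) $\Leftrightarrow$ (iv). The implication (i) $\Ra$ (ii) is immediate: take $\xi=0$. For the remaining two links of the cycle we use \autoref{lmm:controllability-Gramian}(i), keeping in mind that in system \rf{SDE:state} the pair $(u(\cd),z(\cd))$ is determined jointly by the target, so steering $x$ to $\xi$ is exactly the solvability statement of that lemma.

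For (ii) $\Ra$ (iii), we apply \autoref{lmm:controllability-Gramian}(i) with $\xi=0$. Exact null-controllability then says that $x\in\im G(T)$ for every $x\in\dbR^n$. Hence $\im G(T)=\dbR^n$. Since $G(T)\in\bar\dbS^n_+$, this forces $G(T)>0$. For (iii) $\Ra$ (i), if $G(T)>0$ then $\im G(T)=\dbR^n$. So the condition $x-\dbE[\F(T)\xi]\in\im G(T)$ in \autoref{lmm:controllability-Gramian}(i) holds for every pair $(x,\xi)$, and exact controllability follows.

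The substantive step is (iii) $\Leftrightarrow$ (iv). By \autoref{prop:V(n-1)=Vn}, we have $V_{n-1}=V_\i$. It therefore suffices to show $\ker G(T)=V_\i^\perp$, since then $G(T)>0$ iff $V_\i=\dbR^n$ iff $\dim V_{n-1}=n$. (Alternatively one could combine Peng's theorem with (i) $\Leftrightarrow$ (iii), but a direct argument is cleaner.)

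To prove $\ker G(T)=V_\i^\perp$, we start from the representation
\[
\eta^\top G(T)\eta=\dbE\int_0^T|B^\top\F(t)^\top\eta|^2dt .
\]
Thus $\eta\in\ker G(T)$ iff $B^\top\F(t)^\top\eta=0$ for a.e.\ $t$, a.s.

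For the inclusion $V_\i^\perp\subseteq\ker G(T)$, let $\eta\perp V_\i$. The space $V_\i$ is invariant under $A$ and $C$, so $V_\i^\perp$ is invariant under $A^\top$ and $C^\top$. Now $Y(t)\deq\F(t)^\top\eta$ solves the linear SDE
\[
dY=-A^\top Y\,dt-C^\top Y\,dW, \qquad Y(0)=\eta .
\]
Writing this SDE on the invariant subspace $V_\i^\perp$, uniqueness of solutions shows that $Y(t)$ stays in $V_\i^\perp\subseteq\ker B^\top$. Hence $B^\top\F(t)^\top\eta=0$ and $\eta\in\ker G(T)$.

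The converse inclusion $\ker G(T)\subseteq V_\i^\perp$ is the main obstacle. Given $B^\top Y(t)=0$ a.s.\ for a.e.\ $t$, and hence for all $t$ by continuity, we must extract $B^\top W^\top\eta=0$ for every word $W$ over $\{A,C\}$. We proceed by induction on the word length, showing that for every word $M$ the process $B^\top M^\top Y(t)$ vanishes identically. Suppose $B^\top M^\top Y\equiv0$. Applying It\^o's formula,
\[
0=d\big(B^\top M^\top Y\big)=-B^\top M^\top A^\top Y\,dt-B^\top M^\top C^\top Y\,dW .
\]
A continuous semimartingale that is identically zero has zero martingale part and zero finite-variation part. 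Uniqueness of the semimartingale decomposition therefore gives both $B^\top M^\top C^\top Y\equiv0$ and $B^\top M^\top A^\top Y\equiv0$. These are exactly the statements for the words $CM$ and $AM$, since $B^\top(AM)^\top=B^\top M^\top A^\top$ and likewise for $C$, which completes the induction. Evaluating at $t=0$, where $Y(0)=\eta$, gives $\eta\perp\im(MB)$ for all words $M$, that is, $\eta\in V_\i^\perp$.
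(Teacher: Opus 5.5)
Your proof is correct. The cycle (i) $\Rightarrow$ (ii) $\Rightarrow$ (iii) $\Rightarrow$ (i) is the same as the paper's argument via \autoref{lmm:controllability-Gramian}(i).

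The routes differ only in (iii) $\Leftrightarrow$ (iv).
\begin{itemize}
\item The paper combines Peng's Kalman-type theorem \cite[Theorem 3.2]{Peng1994}, which says exact controllability holds iff $\dim V_\infty=n$, with (i) $\Leftrightarrow$ (iii) and \autoref{prop:V(n-1)=Vn}.
\item You prove directly that $\ker G(T)=V_\infty^\perp$. One inclusion uses the invariance of $V_\infty^\perp$ under $A^\top$ and $C^\top$. The other uses an induction on word length, driven by uniqueness of the semimartingale decomposition of $B^\top M^\top\varPhi(t)^\top\eta$.
\end{itemize}

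The paper's route is shorter, but it uses Peng's theorem as a black box. Yours is self-contained and in effect reproves Peng's rank criterion from the Gramian. It also gives the sharper fact $\im G(T)=V_{n-1}$ for every $T>0$. This yields horizon-independence at once. It also identifies the range of the Gramian, i.e.\ the null-controllable initial states, with the $A$- and $C$-invariant subspace used later in the controllability decomposition.

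The steps you compress are routine. Passing from a vanishing martingale part to $B^\top M^\top C^\top Y\equiv0$ uses $\int_0^t|B^\top M^\top C^\top Y(s)|^2ds=0$ together with continuity of $Y$. The finite-variation part is handled the same way. Also, writing $W$ for a word clashes with the Brownian motion; you switch to $M$ later, so use $M$ throughout.
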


\begin{proof} 
The implication (i) $\Ra$ (ii) is obvious.  
The implications (ii) $\Ra$ (iii) and (iii) $\Ra$ (i) follow directly 
from part (i) of \autoref{lmm:controllability-Gramian}.
The equivalence between (iii) and (iv) follows directly from  
\cite[Theorem 3.2]{Peng1994} and \autoref{prop:V(n-1)=Vn}.      
\end{proof}

\begin{remark}
Condition (iv) in \autoref{prop:controllability-kehua} shows that the exact
controllability of system \rf{SDE:state} is independent of the time horizon $T>0$.
Accordingly, in what follows, we simply say that system \rf{SDE:state} is exactly
controllable, without explicitly indicating the interval $[0,T]$. 
\end{remark}

As a direct consequence of \autoref{prop:controllability-kehua}, exact controllability 
is invariant under the following feedback perturbations.

\begin{corollary}
If system \rf{SDE:state} is exactly controllable, 
then for any $F_1,F_2\in\dbR^{m\times n}$, the system
\begin{equation*}\left\{\begin{aligned}
dX(t) &= [(A+BF_1)X(t)+Bu(t)+(C+BF_2)z(t)]dt + z(t)dW(t), \\
 X(0) &= x
\end{aligned}\right.\end{equation*}
is also exactly controllable.  
\end{corollary}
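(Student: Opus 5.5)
The plan is to reduce the corollary to the finite rank test in condition (iv) of \autoref{prop:controllability-kehua}. Write $\wt A\deq A+BF_1$ and $\wt C\deq C+BF_2$, and let $\wt V_k$ be the subspaces defined as in \rf{def:Vk}, with $(A,C)$ replaced by $(\wt A,\wt C)$ and the same matrix $B$. Since the perturbed system has the same backward structure as \rf{SDE:state}, \autoref{prop:controllability-kehua} applies to it verbatim. It therefore suffices to show that $\wt V_\i=V_\i$. In fact I will prove the stronger statement $\wt V_k=V_k$ for every $k\ges0$, and then use $\dim V_{n-1}=n$.

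The equality will be proved by induction on $k$. For $k=0$ both spaces equal $\im B$. Now assume $\wt V_k=V_k$. The space $\wt V_{k+1}$ is spanned by $\wt V_k$ together with the sets $\wt S\,\wt V_k$ for $\wt S\in\{\wt A,\wt C\}$. This description holds because every word of length $k+1$ factors as a letter times a word of length at most $k$, exactly as in the proof of \autoref{prop:V(n-1)=Vn}.

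Take $v\in V_k$. Then $\wt Av=Av+BF_1v$, where $Av\in V_{k+1}$ and $BF_1v\in\im B=V_0\subseteq V_{k+1}$. Hence $\wt Av\in V_{k+1}$. The same computation gives $\wt Cv\in V_{k+1}$, so $\wt V_{k+1}\subseteq V_{k+1}$.

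For the reverse inclusion, use that $A=\wt A-BF_1$ and $C=\wt C-BF_2$. The same argument, now applied with the inductive hypothesis read as $V_k=\wt V_k$, gives $V_{k+1}\subseteq\wt V_{k+1}$. This completes the induction.

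From the induction, $\dim\wt V_{n-1}=\dim V_{n-1}=n$, because the original system is exactly controllable. Applying \autoref{prop:controllability-kehua}, (iv)$\Ra$(i), to the perturbed system then gives its exact controllability on every $[0,T]$.

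No step here is a genuine obstacle. The only point needing care is the recursive description of $V_{k+1}$ in terms of $V_k$: a word of length $k+1$ is a single letter applied to a word of length at most $k$. This is already implicit in \autoref{prop:V(n-1)=Vn}, and it is exactly what makes the correction terms $BF_iv$ harmless, since each lies in $\im B=V_0$.
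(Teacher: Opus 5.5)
Your proof is correct and follows essentially the same route as the paper: both reduce to the finite rank test (iv) of \autoref{prop:controllability-kehua}, and both argue by induction that the correction terms $BF_1(\cdot)$ and $BF_2(\cdot)$ land in lower-level reachable directions. The paper works dually instead: it shows that any $x$ with $x^\top MB=0$ for all $M\in\cW_{n-1}(A+BF_1,C+BF_2)$ also annihilates every $MB$ with $M\in\cW_{n-1}(A,C)$, which is only the one inclusion it needs, obtained by expanding the word products; your primal recursion $V_{k+1}=V_k+AV_k+CV_k$ gives the two-sided identity $\wt V_k=V_k$ for all $k$, so it shows in addition that the reachable subspace itself is feedback invariant.
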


\begin{proof}
By \autoref{prop:controllability-kehua}, it suffices to show that
\begin{equation}\label{A+BF:xMB=0}
x^\top MB=0, \q\forall M\in\cW_{n-1}(A+BF_1,C+BF_2)
\end{equation}
implies $x=0$. So let $x\in\dbR^n$ satisfy \rf{A+BF:xMB=0}. We claim that
\begin{equation}\label{xMB=0:AC}
x^\top MB=0,   \q\forall M\in\cW_{n-1}(A,C). 
\end{equation}
Indeed, by taking $M=I$ in \rf{A+BF:xMB=0} we obtain $x^\top B =0$, and 
\begin{align*}
0&=x^\top (A+BF_1)B = x^\top AB + x^\top BF_1B = x^\top AB, \\
0&=x^\top (C+BF_2)B = x^\top CB + x^\top BF_2B = x^\top CB.
\end{align*}
Thus, $x^\top MB =0$ for all $M\in\cW_1(A,C)$. Fix $k\ges1$ and assume
\begin{equation}\label{xMB=0:k}
x^\top MB =0, \q\forall M\in\cW_k(A,C).
\end{equation}
We next show that 
\begin{equation}\label{xMB=0:k+1}
x^\top MB =0, \q\forall M\in\cW_{k+1}(A,C).
\end{equation}
For $X_1,\dots,X_k,X_{k+1}\in\{A,C\}$, let
$$
\Th_j \deq \begin{cases} F_1, &\text{if } X_j=A,\\F_2, &\text{if } X_j=C,\end{cases} \q j=1,2\ldots,k+1.
$$
Using \rf{A+BF:xMB=0} and \rf{xMB=0:k}, we have by expansion:
\begin{align*}
0&=x^\top(X_{1}+B\Th_{1})(X_{2}+B\Th_{2})\cdots(X_{k+1}+B\Th_{k+1})B  \\
&=x^\top X_{1}(X_{2}+B\Th_{2})\cdots(X_{k+1}+B\Th_{k+1})B  
 +x^\top B\Th_{1}(X_{2}+B\Th_{2})\cdots(X_{k+1}+B\Th_{k+1})B \\
&=x^\top X_{1}X_{2}(X_{3}+B\Th_{3})\cdots(X_{k+1}+B\Th_{k+1})B  \\
&= \cdots ~ \cdots ~  \cdots \\
&=x^\top X_{1}X_{2}\cdots X_{k+1}B.
\end{align*} 
This proves \rf{xMB=0:k+1}, and hence \rf{xMB=0:AC} follows by induction.
Since system \rf{SDE:state} is exactly controllable, \autoref{prop:controllability-kehua}
implies that $\dim V_{n-1}=n$. Therefore, \rf{xMB=0:AC} yields $x=0$. 
\end{proof}

Next, we turn to a controllability decomposition for system \rf{SDE:state}.
We begin with several preliminary results.
For each initial state $x\in\dbR^n$, let  $X(\cd\,;x)$ denote the solution of 
system $[A,C]$ corresponding to $x$.

\begin{definition}
A subspace $V\subseteq\dbR^n$ is said to be \emph{$[A,C]$-invariant} if for every $x\in V$,
$$
X(t;x) \in V \q\as, \q\forall t\ges 0. 
$$
\end{definition}

\begin{lemma}\label{lmm:AC-invariant}
A subspace $V\subseteq\dbR^n$ is $[A,C]$-invariant if and only if 
$V$ is invariant under both $A$ and $C$; that is,
$$
AV\subseteq V, \q CV\subseteq V.
$$
\end{lemma}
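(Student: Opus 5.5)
The plan is to prove the two directions separately, using the orthogonal projection onto $V$ and It\^o's formula.

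\emph{Sufficiency.} Assume $AV\subseteq V$ and $CV\subseteq V$. Let $Q$ be the orthogonal projection onto $V^\perp$, so that $V=\ker Q$. Then $QA=QA(I-Q)+QAQ$, and since $A(I-Q)$ maps into $V$ we have $QA(I-Q)=0$; hence $QA=QAQ$. In the same way, $QC=QCQ$. For $x\in V$, set $Y(t)\deq QX(t;x)$. Then $Y$ solves
$$
dY=QAY\,dt+QCY\,dW,\q Y(0)=Qx=0 .
$$
This is a linear SDE with zero initial datum, so by uniqueness $Y\equiv0$. Therefore $X(t;x)\in V$ a.s.\ for all $t\ges0$.

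\emph{Necessity.} Assume $V$ is $[A,C]$-invariant, and let $Q$ be as above. For $x\in V$ we have $QX(t;x)=0$ a.s.\ for every $t$. Taking expectations gives
$$
0=Q\,\dbE X(t;x)=Qe^{At}x .
$$
Differentiating at $t=0$ yields $QAx=0$, i.e., $AV\subseteq V$.

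For $C$, apply It\^o's formula to $|QX(t)|^2$, which vanishes identically:
$$
0=\dbE|QX(t)|^2=\int_0^t\dbE\big[2X^\top A^\top QX+|QCX|^2\big]ds .
$$
The first term in the integrand is zero, because $QX=0$ a.s. Hence $\int_0^t\dbE|QCX(s)|^2ds=0$ for all $t$. By continuity of $s\mapsto\dbE|QCX(s)|^2$ (from the path continuity and square integrability of $X$), letting $s\to0$ gives $|QCx|^2=0$. Hence $CV\subseteq V$.

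An alternative for the $C$-part is to read off the martingale part of $QX\equiv0$: the stochastic integral $\int_0^t QCX\,dW$ must vanish, which forces $QCX=0$ $ds\otimes d\dbP$-a.e.; continuity at $s=0$ then gives $QCx=0$. The main point needing care is exactly this extraction of the diffusion coefficient from an identically vanishing process. The It\^o-formula argument above handles it most cleanly, since the drift contribution drops out automatically.
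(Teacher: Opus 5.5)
Your proof is correct. The sufficiency direction is the paper's argument in coordinate-free form. Your projection $Q$ onto $V^\perp$ plays the role of the paper's orthogonal change of variables $P^{-1}X=(Y_1,Y_2)$. The identities $QA=QAQ$ and $QC=QCQ$ are the block conditions $A_{21}=C_{21}=0$. In both versions the conclusion then follows from uniqueness for a homogeneous linear SDE with zero initial datum.

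The necessity direction is where you genuinely differ. The paper writes the SDE for $Y_2\equiv0$ and reads off $A_{21}Y_1(t)=0$ and $C_{21}Y_1(t)=0$ directly, then evaluates at $t=0$. This tacitly uses uniqueness of the decomposition of a continuous semimartingale into a finite-variation part and a local-martingale part; your ``alternative'' remark is exactly this argument. You instead extract the two coefficients through moments:
\begin{itemize}
\item[] the mean identity $Qe^{At}x\equiv0$ gives $QAx=0$;
\item[] It\^o's formula for $|QX|^2$, whose drift contribution vanishes because $QX=0$, gives $\int_0^t\dbE|QCX(s)|^2ds=0$, and continuity at $s=0$ then gives $QCx=0$.
\end{itemize}
Your route makes explicit the step the paper passes over in one line. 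It needs only It\^o's formula and the zero-mean property of the stochastic integral. The paper's route is shorter and yields the pathwise statement for all $t$, but only $t=0$ is ever used.
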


\begin{proof}
We may assume that $0<\ell\deq\dim V<n$. 
Choose an orthogonal matrix $P\in\dbR^{n\times n}$ such that $V=P\bp\dbR^\ell \\[-2mm] 0\ep$.
Write
$$
P^{-1}AP=\bp A_{11} & A_{12} \\ A_{21} & A_{22}\ep, \q
P^{-1}CP=\bp C_{11} & C_{12} \\ C_{21} & C_{22}\ep.
$$
where $A_{11},C_{11}\in\dbR^{\ell\times\ell}$, and the remaining blocks 
have compatible dimensions. Then 
$$
AV\subseteq V \iff A_{21}=0, \q CV\subseteq V \iff C_{21}=0.
$$
For any $x\in V$, let $X(\cd)$ be the solution of system $[A,C]$ with initial state $x$, and write
$$
P^{-1}x=\bp y \\ 0\ep, \q P^{-1}X(t)=\bp Y_1(t) \\ Y_2(t)\ep,
$$
where $y\in\dbR^\ell$ and $Y_1(t)\in\dbR^\ell$ for all $t\ges0$.  Then
\begin{align}
dY_1(t) &= [A_{11}Y_1(t)+A_{12}Y_2(t)]dt+[C_{11}Y_1(t)+C_{12}Y_2(t)]dW(t), \label{dY1=}\\
dY_2(t) &= [A_{21}Y_1(t)+A_{22}Y_2(t)]dt+[C_{21}Y_1(t)+C_{22}Y_2(t)]dW(t). \label{dY2=}
\end{align}

If $V$ is $[A,C]$-invariant, then $Y_2(\cd)\equiv0$, and hence \rf{dY2=} yields
$$
A_{21}Y_1(t)=0, \q C_{21}Y_1(t)=0, \q\as,~\forall t\ges0.
$$
In particular, at $t=0$,
$$
A_{21}y=0, \q C_{21}y=0.
$$
Since $y\in\dbR^\ell$ is arbitrary, it follows that $A_{21}=C_{21}=0$. 
Thus, $AV\subseteq V$ and $CV\subseteq V$.

\ms  

Conversely, if $AV\subseteq V$ and $CV\subseteq V$, then $A_{21}=C_{21}=0$, so \rf{dY2=} reduces to
$$
dY_2(t)=A_{22}Y_2(t)dt+C_{22}Y_2(t)dW(t), \q Y_2(0)=0.
$$
Hence $Y_2(\cd)\equiv0$, and therefore
$$
X(t)=P\bp Y_1(t)\\0\ep\in V, \q\forall t\ges0.
$$
Thus $V$ is $[A,C]$-invariant.
\end{proof}

\begin{lemma}\label{lmm:Vn-invariant}
Let $V_k$ be the space defined by \rf{def:Vk}. Then $V_{n-1}$ is $[A,C]$-invariant. 
\end{lemma}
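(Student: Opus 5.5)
By \autoref{lmm:AC-invariant}, it suffices to show that $V_{n-1}$ is invariant under both $A$ and $C$, that is,
$$
AV_{n-1}\subseteq V_{n-1}, \q CV_{n-1}\subseteq V_{n-1}.
$$
The plan is to first show that $V_{n-1}=V_\i$, and then check directly that $V_\i$ is invariant under $A$ and $C$.

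For the first step, use \autoref{prop:V(n-1)=Vn}. It gives some $k\les n-1$ with $V_k=V_{k+1}=\cdots$. Since the sequence $\{V_j\}$ is nondecreasing, this yields $V_{n-1}=V_j$ for all $j\ges n-1$, and hence $V_{n-1}=V_\i$. If $B=0$, every $V_j$ equals $\{0\}$ and the claim is trivial.

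For the second step, take a generator of $V_\i$, namely a vector $MBu$ with $M\in\cW_j(A,C)$ for some $j\ges0$ and $u\in\dbR^m$. For $S\in\{A,C\}$, the product $SM$ is a word in $\cW_{j+1}(A,C)$, so $SMBu\in V_{j+1}\subseteq V_\i$. By linearity of $S$, the span of these generators is mapped into $V_\i$. Thus $AV_\i\subseteq V_\i$ and $CV_\i\subseteq V_\i$.

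Combining the two steps gives $AV_{n-1}\subseteq V_{n-1}$ and $CV_{n-1}\subseteq V_{n-1}$, and \autoref{lmm:AC-invariant} then shows that $V_{n-1}$ is $[A,C]$-invariant. There is no real obstacle here. The only point needing care is the identification $V_{n-1}=V_\i$, where one must use monotonicity to pass from stabilization at some $k\les n-1$ to stabilization at exactly $n-1$.
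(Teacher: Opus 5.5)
Your proof is correct and follows essentially the same route as the paper: both use \autoref{prop:V(n-1)=Vn} to show that left-multiplying a word by $A$ or $C$ does not leave $V_{n-1}$, and then invoke \autoref{lmm:AC-invariant}. The only cosmetic difference is that you first identify $V_{n-1}$ with $V_\infty$ and check invariance there, whereas the paper shows directly that $AV_{n-1}$ and $CV_{n-1}$ lie in $V_n=V_{n-1}$.
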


\begin{proof}
For any $x\in V_{n-1}$, we may write
$$
x=\sum_i \a_i M_iBu_i
$$
with $\a_i\in\dbR$, $M_i\in\cW_{n-1}(A,C)$, and $u_i\in\dbR^m$. Then
$$
Ax=\sum_i \a_i (AM_i)Bu_i,\q  Cx=\sum_i \a_i (CM_i)Bu_i.
$$
Since $AM_i,CM_i\in\cW_n(A,C)$, it follows from \autoref{prop:V(n-1)=Vn} that
$Ax,Cx\in V_n=V_{n-1}$. Thus $AV_{n-1},CV_{n-1}\subseteq V_{n-1}$, 
and the conclusion follows from \autoref{lmm:AC-invariant}.
\end{proof}

We are now ready to state and prove the controllability decomposition 
for system \rf{SDE:state}.

\begin{theorem}[Controllability Decomposition]\label{thm:control-decomposition}
Suppose that $0<k\deq \dim V_{n-1}<n$.  
Then there exists an orthogonal matrix $P\in\dbR^{n\times n}$ such that
\begin{equation}\label{Controllable-Form}
P^\top AP = \begin{pmatrix}A_{11} & A_{12}\\ 0 & A_{22}\end{pmatrix}, \q
P^\top CP = \begin{pmatrix}C_{11} & C_{12}\\ 0 & C_{22}\end{pmatrix}, \q
P^\top B = \begin{pmatrix}B_1 \\ 0\end{pmatrix},	\q
\dim U_{n-1} =k,
\end{equation}
where $A_{11},C_{11}\in\dbR^{k\times k}$, $A_{12},C_{12}\in\dbR^{k\times(n-k)}$,
$A_{22},C_{22}\in\dbR^{(n-k)\times(n-k)}$, $B_1\in\dbR^{k\times m}$, and 
$$
U_{n-1} \deq \Span\big\{\im(MB_1): M\in\cW_{n-1}(A_{11},C_{11})\big\}. 
$$
\end{theorem}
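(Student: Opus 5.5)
The plan is to build $P$ from an orthonormal basis adapted to $V_{n-1}$, read off the block structure from the invariance proved in \autoref{lmm:Vn-invariant}, and then identify $U_{n-1}$ with the first-block coordinates of $V_{n-1}$.

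\emph{Choice of $P$.} Let $p_1,\dots,p_k$ be an orthonormal basis of $V_{n-1}$. Extend it by an orthonormal basis $p_{k+1},\dots,p_n$ of $V_{n-1}^\perp$, and set $P=(p_1,\dots,p_n)$. Then $P$ is orthogonal and $V_{n-1}=P\bp\dbR^k\\[-2mm]0\ep$.

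\emph{Block triangular form.} By \autoref{lmm:Vn-invariant} combined with \autoref{lmm:AC-invariant}, we have $AV_{n-1}\subseteq V_{n-1}$ and $CV_{n-1}\subseteq V_{n-1}$. As in the proof of \autoref{lmm:AC-invariant}, this forces the $(2,1)$ blocks of $P^\top AP$ and $P^\top CP$ to vanish. Since $\im B=V_0\subseteq V_{n-1}$, each column of $P^\top B$ has zero last $n-k$ entries. Hence $P^\top B=\bp B_1\\0\ep$.

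\emph{The rank condition.} It remains to show $\dim U_{n-1}=k$. Write $\hat A=P^\top AP$ and $\hat C=P^\top CP$. For any word $M=X_1\cdots X_j$ over $\{A,C\}$, let $\hat M$ be the corresponding word over $\{\hat A,\hat C\}$. Then $P^\top MB=\hat MP^\top B$. Block upper triangular matrices are closed under products, and the $(1,1)$ block of a product is the product of the $(1,1)$ blocks. Consequently
$$
P^\top MB=\bp M_{11}B_1\\0\ep,
$$
where $M_{11}$ is the same word over $\{A_{11},C_{11}\}$. This correspondence between words in $\cW_{n-1}(A,C)$ and words in $\cW_{n-1}(A_{11},C_{11})$ is a bijection. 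Therefore
$$
P^\top V_{n-1}=\bp U_{n-1}\\0\ep,
$$
and since $P$ is invertible, $\dim U_{n-1}=\dim V_{n-1}=k$.

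The only point requiring care is this last step: checking that the $(1,1)$ block of a product of block upper triangular matrices equals the product of the $(1,1)$ blocks, and that words of length at most $n-1$ correspond exactly. No appeal to stabilization at $k-1$ steps is needed, because $U_{n-1}$ is defined with the same word length $n-1$.
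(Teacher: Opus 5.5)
Your proposal is correct and follows essentially the same route as the paper: an orthonormal basis adapted to $V_{n-1}$, the block-triangular form from the $A$- and $C$-invariance of $V_{n-1}$ (\autoref{lmm:Vn-invariant}), the inclusion $\im B\subseteq V_{n-1}$ for the form of $P^\top B$, and the word-by-word identity $P^\top MB=\begin{pmatrix}M_{11}B_1\\ 0\end{pmatrix}$, which gives $P^\top V_{n-1}=\begin{pmatrix}U_{n-1}\\ 0\end{pmatrix}$ and hence $\dim U_{n-1}=k$. Your version is slightly more explicit than the paper's on two points: the paper writes ``orthogonal basis'' where orthonormal is meant, and it leaves implicit the fact that the $(1,1)$ block of a product of block upper triangular matrices is the product of the $(1,1)$ blocks.
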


\begin{proof}
Choose an orthogonal basis $\{e_1,\dots,e_k\}$ of $V_{n-1}$ and extend it to an
orthogonal basis $\{e_1,\dots,e_n\}$ of $\dbR^n$. Let
$$
P\deq(e_1,\dots,e_n).
$$
Since $V_{n-1}$ is $[A,C]$-invariant by \autoref{lmm:Vn-invariant}, we have
$$
P^\top AP=\begin{pmatrix}A_{11}&A_{12}\\0&A_{22}\end{pmatrix},\qquad
P^\top CP=\begin{pmatrix}C_{11}&C_{12}\\0&C_{22}\end{pmatrix}
$$
for suitable block matrices of compatible dimensions. Also, since $\im B\subseteq V_{n-1}$,
$$
P^\top B=\begin{pmatrix}B_1\\0\end{pmatrix}
$$
for some $B_1\in\dbR^{k\times m}$. Thus the first three identities 
in \eqref{Controllable-Form} hold. Moreover,
\begin{align*}
k&=\dim V_{n-1} =\dim(P^\top V_{n-1}) =\dim\Span\big\{\im(P^\top MB): M\in\cW_{n-1}(A,C)\big\} \\
&=\dim\Span\big\{\im(P^\top MPP^\top B): M\in\cW_{n-1}(A,C)\big\} \\
&=\dim\Span\big\{\im(MP^\top B): M\in\cW_{n-1}(P^\top AP,P^\top CP)\big\} \\
&=\dim\Span\lt\{\im\!\!\bp MB_1 \\ 0 \ep: M\in\cW_{n-1}(A_{11},C_{11})\rt\} \\
&=\dim\Span\big\{\im MB_1: M\in\cW_{n-1}(A_{11},C_{11})\big\} \\
&=\dim U_{n-1}. 
\end{align*}
This completes the proof.
\end{proof}

We now present the main result of this section, namely, a Hautus-type characterization 
of exact controllability. To this end, we first introduce a linear operator on $\dbS^n$ 
and compute its adjoint.
Endowed with the Frobenius inner product $\lan\cd\,,\cd\ran$, 
$\dbS^n$ is a finite-dimensional inner product space with $\dim\dbS^n={n(n+1)\over2}$. 
For given $A,C\in\dbR^{n\times n}$, define the linear operator $\cL_{\scp(A,C)}:\dbS^n\to\dbS^n$ by
\begin{equation}\label{def:L(M)}
\cL_{\scp(A,C)}(M) \deq MA + A^\top M + C^\top MC, \q M\in\dbS^n.
\end{equation}

\begin{lemma}\label{lmm:adjoint-cL}
The adjoint operator $\cL_{\scp(A,C)}^*$ of $\cL_{\scp(A,C)}$ is given by
$$ 
\cL_{\scp(A,C)}^*(M) = AM + MA^\top + CMC^\top, \q M\in\dbS^n. 
$$
\end{lemma}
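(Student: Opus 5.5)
The plan is to verify directly that the claimed formula satisfies the defining identity of the adjoint with respect to the Frobenius inner product on $\dbS^n$:
$$
\lan \cL_{\scp(A,C)}(M),N\ran=\lan M,\cL^*_{\scp(A,C)}(N)\ran,\q\forall M,N\in\dbS^n.
$$
Since $\dbS^n$ is a finite-dimensional inner product space, the adjoint exists and is unique. It therefore suffices to check two things: that the proposed map $N\mapsto AN+NA^\top+CNC^\top$ sends $\dbS^n$ into $\dbS^n$, and that it satisfies this identity.

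The first point is immediate. If $N^\top=N$, then $(AN)^\top=NA^\top$, so $AN+NA^\top$ is symmetric, and $(CNC^\top)^\top=CNC^\top$.

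For the identity, I would expand the left-hand side term by term, using $\lan X,Y\ran=\tr(X^\top Y)$, the cyclic property of the trace, invariance of the trace under transposition, and $M^\top=M$, $N^\top=N$. The three terms are handled as follows.
\begin{itemize}
\item[] First, $\tr((MA)^\top N)=\tr(A^\top MN)=\tr(MNA^\top)=\lan M,NA^\top\ran$.
\item[] Second, $\tr((A^\top M)^\top N)=\tr(MAN)=\lan M,AN\ran$.
\item[] Third, $\tr((C^\top MC)^\top N)=\tr(C^\top MCN)=\tr(MCNC^\top)=\lan M,CNC^\top\ran$.
\end{itemize}
Summing the three terms gives $\lan M,AN+NA^\top+CNC^\top\ran$, which is the identity we want.

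There is no real obstacle here. The only care needed is bookkeeping of transposes in the first two terms. Because $M$ is symmetric, $\lan M,X\ran=\lan M,X^\top\ran$ for any $X$, so the symmetric versus non-symmetric pairing of $A$ with $N$ does not matter.
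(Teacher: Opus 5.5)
Your proposal is correct and follows essentially the same route as the paper, which verifies $\lan \cL^*(M),N\ran=\lan M,\cL(N)\ran$ using the cyclicity of the trace and the symmetry of $M,N$. Your explicit check that $N\mapsto AN+NA^\top+CNC^\top$ maps $\dbS^n$ into itself is a small detail the paper leaves implicit.
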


\begin{proof}
Write $\cL_{\scp(A,C)}$ and $\cL_{\scp(A,C)}^*$ simply as $\cL$ and $\cL^*$.  
For any $M,N\in\dbS^n$, we have  
\begin{align*}
\lan\cL^*(M),N\ran
&= \lan M,\cL(N)\ran = \tr[M\cL(N)] = \tr[MNA + MA^\top N + MC^\top NC] \\
&= \tr[AMN + MA^\top N + CMC^\top N] \\
&= \lan AM+MA^\top+CMC^\top, N\ran,
\end{align*}
which yields the desired formula for $\cL^*$.
\end{proof}

At first sight, it is not clear whether the operator $\cL_{\scp(A,C)}$ always possesses a nonzero eigenmatrix in $\bar\dbS^n_+$. The next result shows that this is indeed the case.

\begin{proposition}\label{prop:L(P)=rP}
There exist $\l\in\dbR$ and $P\in\bar\dbS^n_+\setminus\{0\}$ such that $\cL_{\scp(A,C)}(P)=\l P$.
\end{proposition}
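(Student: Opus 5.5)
The plan is to apply the Krein--Rutman theorem in finite dimensions, i.e.\ the Perron--Frobenius theorem for cone-preserving maps, to a suitable shift or exponential of $\cL\equiv\cL_{\scp(A,C)}$. The cone $\bar\dbS^n_+$ is closed, convex, pointed and solid in $\dbS^n$, with interior $\dbS^n_+$. By the finite-dimensional Krein--Rutman theorem, any linear map $T:\dbS^n\to\dbS^n$ with $T(\bar\dbS^n_+)\subseteq\bar\dbS^n_+$ has its spectral radius $\rho(T)$ as an eigenvalue with an eigenvector in $\bar\dbS^n_+\setminus\{0\}$. The operator $\cL$ itself need not be positive, because of the term $MA+A^\top M$. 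However, $M\mapsto C^\top MC$ is positive, and the flow generated by $M\mapsto MA+A^\top M$ is $M\mapsto e^{A^\top t}Me^{At}$, which is also positive.

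The first step is to show that $e^{t\cL}$ maps $\bar\dbS^n_+$ into itself for every $t\ges0$. Write $\cL=\cL_0+\cK$ with $\cL_0(M)=MA+A^\top M$ and $\cK(M)=C^\top MC$. Both $e^{t\cL_0}$ and $e^{t\cK}=\sum_k t^k\cK^k/k!$ preserve $\bar\dbS^n_+$. The Lie--Trotter product formula $e^{t\cL}=\lim_{N\to\i}(e^{t\cL_0/N}e^{t\cK/N})^N$, together with closedness of the cone, gives positivity of $e^{t\cL}$.

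Alternatively, positivity follows probabilistically. Let $\varPi$ solve \rf{matrix-SDE}. By It\^o's rule, $t\mapsto\dbE[\varPi(t)^\top M\varPi(t)]$ solves $\dot Q=\cL(Q)$ with $Q(0)=M$, so $e^{t\cL}(M)=\dbE[\varPi(t)^\top M\varPi(t)]\ges0$ whenever $M\ges0$. I would present this version, since it uses only objects already in the paper.

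The second step applies Krein--Rutman to $T=e^{\cL}$. This produces $P\in\bar\dbS^n_+\setminus\{0\}$ with $e^{\cL}P=\rho P$, where $\rho=\rho(e^{\cL})>0$. This is the main obstacle: such a $P$ need not be an eigenvector of $\cL$ itself, since $e^{\cL}$ can merge eigenvalues $\mu$ that differ by $2\pi i$. To avoid this, I would instead use a resolvent. Choose $r>$ spectral bound of $\cL$. Then
$$(rI-\cL)^{-1}=\int_0^\i e^{-rt}e^{t\cL}\,dt,$$
which is positive by the first step.

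Apply Krein--Rutman to $R=(rI-\cL)^{-1}$, obtaining $P\in\bar\dbS^n_+\setminus\{0\}$ with $RP=\rho(R)P$ and $\rho(R)>0$, since $R$ is invertible. Applying $rI-\cL$ to both sides gives
$$\cL(P)=\bigl(r-\rho(R)^{-1}\bigr)P,$$
which is the desired eigenrelation with real $\l=r-\rho(R)^{-1}$.

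For completeness, I would state the finite-dimensional Krein--Rutman fact precisely. A sketch of it goes as follows. If $\rho(R)>0$, consider $x_\e=\sum_k R^k x_0/(\rho+\e)^{k+1}$ for interior $x_0$, normalize, and let $\e\downarrow0$. Compactness of the normalized cone slice and the blow-up $|x_\e|\to\i$ as $\e\to0$ (because the resolvent of $R$ has a pole at $\rho$) give a limit $P\ges0$, $|P|=1$, with $RP=\rho P$. I would simply cite this as the standard Krein--Rutman/Perron--Frobenius theorem for proper cones.
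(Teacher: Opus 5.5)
Your argument is correct and is essentially the paper's. The paper's operator $\cK(\L)=\dbE\int_0^\i\varPi_\a(t)^\top\L\varPi_\a(t)\,dt$ is exactly your positive resolvent $(2\a I-\cL_{\scp(A,C)})^{-1}=\int_0^\i e^{-2\a t}e^{t\cL_{\scp(A,C)}}dt$; the paper takes $2\a$ above the largest eigenvalue of $A+A^\top+C^\top C$, which places it above the spectral bound via \autoref{lmm:equi-stable} with $P=I$, and then uses Krein--Rutman and the same algebra to get $\cL_{\scp(A,C)}(P)=\frac{2\a r-1}{r}P$.

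One small precision: It\^o's rule directly gives $\frac{d}{dt}\dbE[\varPi^\top M\varPi]=\dbE[\varPi^\top\cL_{\scp(A,C)}(M)\varPi]$, that is, $\dot S_t=S_t\cL_{\scp(A,C)}$. Your claim $S_t=e^{t\cL_{\scp(A,C)}}$ then follows from uniqueness for this backward equation, not from $\dot Q=\cL_{\scp(A,C)}(Q)$ read off directly.
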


\begin{proof}
Let $\b\in\dbR$ be the largest eigenvalue of $A+A^\top+C^\top C$. Then, for any $\a>\b/2$,
$$
(A-\a I)+(A-\a I)^\top+C^\top C = A+A^\top+C^\top C-2\a I \les (\b-2\a)I<0.
$$
Hence, by \autoref{lmm:equi-stable}, the system $[A-\a I,C]$ is $L^2$-stable.
Fix such an $\a$. By \autoref{lmm:equi-stable}, the operator
$$
\cK:\dbS^n\to\dbS^n,\q
\cK(\L)\deq \dbE\int_0^\i\varPi_\a(t)^\top \L \varPi_\a(t)dt 
$$
is well defined, maps $\bar\dbS^n_+$ into itself, and satisfies
\begin{align}\label{26-4-7:1}
\cK(\L)(A-\a I)+(A-\a I)^\top\cK(\L)+C^\top\cK(\L)C+\L=0, \q\forall\L\in\dbS^n,
\end{align}
where $\varPi_\a(\cd)$ is the solution of
$$
d\varPi_\a(t)=(A-\a I)\varPi_\a(t)\,dt+C\varPi_\a(t)\,dW(t),\q \varPi_\a(0)=I.
$$
By the Krein--Rutman theorem, there exist $r\ges0$ and $P\in\bar\dbS^n_+\setminus\{0\}$ such that
$$
\cK(P)=r(\cK)P.
$$
Moreover, $r>0$; otherwise,
$$
0=\cK(P)=\dbE\int_0^\infty \varPi_\a(t)^\top P\varPi_\a(t) dt,
$$
which implies $P=0$, a contradiction.
Now substituting $\L=P$ into \rf{26-4-7:1} and using $\cK(P)=rP$, we obtain
\begin{align*}
0 &= r\big[P(A-\a I)+(A-\a I)^\top P+C^\top PC\big]+P
   = r\big(PA+A^\top P+C^\top PC\big)+(1-2\a r)P.
\end{align*}
Hence
$$
\cL_{\scp(A,C)}(P)=PA+A^\top P+C^\top PC = \frac{2\a r-1}{r} P.
$$
This completes the proof.
\end{proof}

The following result provides a Hautus-type characterization of exact controllability.

\begin{theorem}\label{thm:H-test-controllability}
The following statements are equivalent:  
\begin{enumerate}[\rm(i)]
\item System \rf{SDE:state} is exactly controllable.
\item $B^\top H\ne0$ for any eigenmatrix $H\in\bar\dbS^n_+\setminus\{0\}$ of the operator
      $$
      \cL_{\scp(-A,C)}: \dbS^n \to \dbS^n, \q \cL_{\scp(-A,C)}(M) = -MA - A^\top M + C^\top MC. 
      $$ 
\end{enumerate}
\end{theorem}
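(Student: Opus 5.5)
We prove both directions through the rank condition $\dim V_{n-1}=n$ of \autoref{prop:controllability-kehua}, combined with the controllability decomposition in \autoref{thm:control-decomposition} and the existence result \autoref{prop:L(P)=rP}.

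\emph{(i) $\Ra$ (ii).} Let $H\in\bar\dbS^n_+\setminus\{0\}$ satisfy $\cL_{\scp(-A,C)}(H)=\l H$ and $B^\top H=0$. Set $K\deq\ker H$; it is a proper subspace of $\dbR^n$ containing $\im B$. We show that $K$ is invariant under both $A$ and $C$. Take $x\in K$ and sandwich the eigen-equation:
$$
x^\top(-HA-A^\top H+C^\top HC)x=\l\, x^\top Hx=0 .
$$
Since $Hx=0$, both $x^\top HAx$ and $x^\top A^\top Hx$ vanish. Hence $x^\top C^\top HCx=0$, and positive semidefiniteness gives $HCx=0$, so $Cx\in K$.

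Next apply the eigen-equation to $x$ itself. The left side is $-HAx+C^\top HCx=-HAx$, and the right side is $\l Hx=0$. Thus $HAx=0$, so $Ax\in K$.

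It follows that $K$ contains $\im B$ and is invariant under $A$ and $C$, hence contains $V_{n-1}$. Then $\dim V_{n-1}\les\dim K<n$, which contradicts (i) by \autoref{prop:controllability-kehua}. This is where the cone restriction is used; the argument fails for indefinite $H$.

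\emph{(ii) $\Ra$ (i).} Suppose the system is not exactly controllable. If $B=0$, apply \autoref{prop:L(P)=rP} to the pair $(-A,C)$: it yields a nonzero $H\in\bar\dbS^n_+$ that is an eigenmatrix of $\cL_{\scp(-A,C)}$, and trivially $B^\top H=0$, contradicting (ii).

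Otherwise $0<k\deq\dim V_{n-1}<n$, and we use the orthogonal $P$ of \autoref{thm:control-decomposition}. Apply \autoref{prop:L(P)=rP} to the pair $(-A_{22},C_{22})$ on $\dbS^{n-k}$. This gives $\l\in\dbR$ and $H_{22}\in\bar\dbS^{n-k}_+\setminus\{0\}$ with
$$
-H_{22}A_{22}-A_{22}^\top H_{22}+C_{22}^\top H_{22}C_{22}=\l H_{22}.
$$
Set
$$
H\deq P\bp 0&0\\0&H_{22}\ep P^\top\in\bar\dbS^n_+\setminus\{0\}.
$$
Then $B^\top H=(B_1^\top,0)\,\diag(0,H_{22})\,P^\top=0$.

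It remains to check $\cL_{\scp(-A,C)}(H)=\l H$. Using the block upper-triangular forms, $\diag(0,H_{22})\,P^\top AP$ has a zero first block column apart from the row $(0,H_{22}A_{22})$. Similarly, $(P^\top CP)^\top\diag(0,H_{22})(P^\top CP)$ equals $\diag(0,C_{22}^\top H_{22}C_{22})$, because the $(2,1)$ block of $P^\top CP$ is zero. Conjugating by $P$ (note $\cL_{\scp(-A,C)}$ transforms covariantly under orthogonal changes of coordinates) gives $\cL_{\scp(-A,C)}(H)=\l H$. This contradicts (ii).

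\emph{Main obstacle.} The main obstacle is the first direction: recognising that $\ker H$ is the right invariant subspace, and extracting $HCx=0$ from the quadratic form before deducing $HAx=0$. The second direction reduces to block bookkeeping once \autoref{prop:L(P)=rP} is applied to the uncontrollable block.
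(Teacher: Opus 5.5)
Your proof is correct. For (ii) $\Rightarrow$ (i) you follow the paper: the controllability decomposition plus \autoref{prop:L(P)=rP} applied to the uncontrollable block. You also treat the case $B=0$, which the paper's proof passes over when it writes $0<k<n$.

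For (i) $\Rightarrow$ (ii) your route is genuinely different. The paper argues analytically via the Gramian. The solution of $\dot P+\cL^*_{\scp(-A,C)}(P)+BB^\top=0$, $P(T)=0$, satisfies $P(0)=G(T)>0$. For an eigenmatrix $H$ with $B^\top H=0$, the scalar $\lan P(t),H\ran$ solves a homogeneous linear ODE with zero terminal value, so $\lan G(T),H\ran=0$, which is impossible for $0\ne H\ges0$. This mirrors the Gramian proof of the classical PBH test and places the cone restriction in the positivity of that pairing. It does, however, rely on the Gramian characterization (iii) of \autoref{prop:controllability-kehua} and on \autoref{lmm:controllability-Gramian}(ii).

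Your kernel-invariance argument is purely algebraic and uses only the rank condition (iv). It isolates the role of the cone in the single step $x^\top C^\top HCx=0\Rightarrow HCx=0$. It also yields more structure: $V_{n-1}\subseteq\ker H$, i.e.\ $\im H\subseteq V_{n-1}^\perp$, for every positive semidefinite obstruction, which is exactly the form of the obstructions built in the converse. Moreover, it uses neither the value of $\l$ nor the sign in front of $A$; only $\ker H\subseteq\ker\cL_{\scp(-A,C)}(H)$ matters. Together with \autoref{prop:L(P)=rP} applied to $(A_{22},C_{22})$, this shows that the same controllability test holds with $\cL_{\scp(A,C)}$ in place of $\cL_{\scp(-A,C)}$. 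The sign of $A$ genuinely matters only once the spectral restriction $\l\les0$ enters, as in \autoref{thm:H-test-stability}.
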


\begin{proof}
By \autoref{lmm:adjoint-cL}, the adjoint operator of $\cL_{\scp(-A,C)}$ is given by 
$$
\cL^*_{\scp(-A,C)}(M) = -AM -MA^\top + CMC^\top. 
$$
By part (ii) of \autoref{lmm:controllability-Gramian} and \autoref{prop:controllability-kehua}, 
system \rf{SDE:state} is exactly controllable on $[0,T]$ if and only if  
the solution of $P(\cd)\in C([0,T];\dbS^n)$ of
\begin{equation*}\left\{\begin{aligned}
& \dot P(t) + \cL^*_{\scp(-A,C)}(P(t)) + BB^\top = 0,  \q t\in[0,T],\\
& P(T) = 0 
\end{aligned}\right.\end{equation*}
satisfies $P(0)>0$. 

\ms  

Assume first that $P(0)>0$, and let $H\in\bar\dbS^n_+\setminus\{0\}$ be an eigenmatrix of $\cL_{\scp(-A,C)}$
associated with the eigenvalue $\l$. Then 
\begin{align*}
d\lan P(t),H\ran 
&= -\big[\lan \cL^*_{\scp(-A,C)}(P(t)),H\ran + \lan BB^\top,H\ran\big] dt \\
&= -\big[\lan P(t),\cL_{\scp(-A,C)}(H)\ran + \lan BB^\top,H\ran\big] dt \\
&= -\big[\l\lan P(t),H\ran + \lan BB^\top,H\ran\big] dt.
\end{align*}
If $B^\top H=0$, then $\lan BB^\top,H\ran = \tr(BB^\top H) = 0$, and hence
$$
d\lan P(t),H\ran = -\l\lan P(t),H\ran dt, \q \lan P(T),H\ran = 0,
$$
which yields $\lan P(t),H\ran=0$ for all $t\in[0,T]$. 
In particular, $\lan P(0),H\ran=0$, which contradicts the facts that $P(0)>0$ and
$H\in\bar\dbS^n_+\setminus\{0\}$. 

\ms  

Conversely, assume that condition (ii) holds. 
If system \rf{SDE:state} is not exactly controllable, then  $0<k\deq\dim V_{n-1}<n$.
Let $P$ be the orthogonal matrix given by \autoref{thm:control-decomposition}, so that
$$
P^\top AP = \begin{pmatrix}A_{11} & A_{12}\\ 0 & A_{22}\end{pmatrix}, \q
P^\top CP = \begin{pmatrix}C_{11} & C_{12}\\ 0 & C_{22}\end{pmatrix}, \q
P^\top B = \begin{pmatrix}B_1 \\ 0\end{pmatrix}.  
$$
By \autoref{prop:L(P)=rP}, the operator $\cL_{\scp(-A_{22},C_{22})}$ admits an eigenmatrix 
$H_{22}\in\bar\dbS^{n-k}_+\setminus\{0\}$ with eigenvalue $\l$. Define
$$
H\deq P\begin{pmatrix}0&0\\0&H_{22}\end{pmatrix}P^\top
\in \bar\dbS^n_+\setminus\{0\}.
$$
A direct computation using the above block forms shows that 
$$
P^\top \cL_{\scp(-A,C)}(H)P
= \bp0&0\\0&\cL_{\scp(-A_{22},C_{22})}(H_{22})\ep  = \l \bp0&0\\0&H_{22}\ep.
$$
Hence $\cL_{\scp(-A,C)}(H)=\l H$. Moreover, 
$$
B^\top HP = B^\top PP^\top HP = (B_1^\top, 0)\bp 0 & 0 \\ 0 & H_{22}\ep = 0. 
$$
and hence $B^\top H=0$, contradicting (ii). 
Therefore system \rf{SDE:state} is exactly controllable.   
\end{proof}

In analogy with classical Hautus-type tests, one might naturally expect that 
the exact controllability of system \rf{SDE:state} is equivalent to the condition
\begin{equation}\label{BH:strong}
B^\top H \neq 0 \text{ for all eigenmatrices } H \text{ of } \cL_{\scp(-A,C)}.
\end{equation}
Indeed, in the proof of \autoref{thm:H-test-controllability}, any eigenmatrix 
$H$ with $B^\top H=0$ leads to
$$
\frac{d}{dt}\lan P(t),H\ran=-\l\lan P(t),H\ran, \q \lan P(T),H\ran = 0. 
$$
This may suggest that all eigenmatrices should be excluded. 
However, the key contradiction in the proof comes from the positivity property
$$
\lan P(0),H\ran>0 \q\text{whenever } P(0)>0,~H\in\bar\dbS^n_+\setminus\{0\},
$$
which in general fails for indefinite eigenmatrices. 
Thus, condition (ii) of \autoref{thm:H-test-controllability} only requires that
\begin{equation}\label{BH:weak}
B^\top H \neq 0 \text{ for all real positive semidefinite eigenmatrices } H \text{ of }\cL_{\scp(-A,C)}.
\end{equation}
Since \rf{BH:strong} is strictly stronger than \rf{BH:weak}, and the latter is equivalent to 
exact controllability by \autoref{thm:H-test-controllability}, condition \rf{BH:strong} is 
therefore \emph{sufficient but not necessary}. 
We next present a counterexample showing that exact controllability may hold even when 
\rf{BH:strong} fails.

\begin{example}\label{example}
Consider  
$$
A = \begin{pmatrix*}[r] 0 & -1 & 0 \\ 0 & 0 & 0 \\ 1 & 0 & 0 \end{pmatrix*}, \q 
C = \bp 0 &  0 & 0 \\ 1 & 0 & 1 \\  0 & 1 & 0 \ep, \q  
B = \bp 1 \\ 0 \\ 0\ep.
$$
In this example, $n=3$ and 
$$
V_2 = \Span\big\{\im(MB): M\in\cW_2(A,C)\big\} = \im(B,AB,CB,A^2B,C^2B,ACB,CAB)
$$
By a straightforward computation, we have
$$
\rank(B,AB,CB,A^2B,C^2B,ACB,CAB) 
\ges \rank(B,AB,CB) = \rank\!\bp 1 & 0 & 0 \\ 0 & 0 & 1 \\ 0 & 1 & 0 \ep = 3.
$$
Hence $V_2=\dbR^3$. It follows from \autoref{prop:controllability-kehua} that  
the corresponding system \rf{SDE:state} is exactly controllable.  
However, one can verify that the indefinite matrix 
$$
H = \bp 0 & 0 & 0 \\ 0 & 0 & 1 \\ 0 & 1 & 0 \ep \in\dbS^3
$$
is an eigenmatrix of $\cL_{\scp(-A,C)}$ associated with the eigenvalue $\l=1$, while $B^\top H=0$. 
Thus condition \rf{BH:strong} fails, although the system is exactly controllable. 
\end{example}

\section{Stabilizability}  

In this section, we study the $L^2$-stabilizability of system \rf{SDE:state}.
By \autoref{def:stabilizability} and \autoref{lmm:equi-stable}, the notions of
$L^2$-stabilizability, exponential mean-square stabilizability, and asymptotic
mean-square stabilizability are equivalent. Accordingly, in what follows, we simply
speak of stabilizability. 
The main goal of this section is to establish a Hautus-type characterization of
stabilizability; see \autoref{thm:H-test-stability}. We also show that exact
controllability implies stabilizability; see \autoref{prop:control-stable}.

\ms 

We begin by showing that exact controllability implies stabilizability.

\begin{proposition}\label{prop:control-stable}
If system \rf{SDE:state} is exactly controllable, then it is stabilizable. 
\end{proposition}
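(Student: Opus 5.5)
We need to show that exact controllability implies stabilizability of system \rf{SDE:state}. Viewed as a system of the form $[A,C;B,D]$, the controls are $(u,z)$, the drift input matrix is $(B,C)$, and the diffusion input matrix is $(0,I)$. By \autoref{lmm:ACBD}(iii), it suffices to show that for every $x\in\dbR^n$ there is a pair $(u,z)$ in $L^2_\dbF(0,\i;\dbR^m)\times L^2_\dbF(0,\i;\dbR^n)$ whose state satisfies $\dbE\int_0^\i|X(t)|^2dt<\i$. Here stabilizability is understood in the sense of \autoref{def:stabilizability} with the stabilizer acting on both $u$ and $z$.

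The plan is to steer the state to zero in finite time and then keep it there. Fix $T=1$. By \autoref{prop:controllability-kehua}, the system is exactly null-controllable on $[0,1]$. So for each $x$ there exists $(u,z)$ on $[0,1]$, square integrable (as in \autoref{lmm:controllability-Gramian}(i)), with $X(1)=0$ a.s.

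We then extend the controls by $u\equiv0$ and $z\equiv0$ on $(1,\i)$. The state equation on $[1,\i)$ becomes $dX=AX\,dt$ with $X(1)=0$, so $X\equiv0$ there. The extended controls lie in the $L^2_\dbF(0,\i)$ spaces. Moreover $\dbE\int_0^1|X|^2dt<\i$, by the standard $L^2$ estimate for linear SDEs with square-integrable inputs. Hence condition (iii) of \autoref{lmm:ACBD} holds.

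\autoref{lmm:ACBD} then gives $L^2$-stabilizability, with an explicit stabilizer obtained from the Riccati equation \rf{ACBD:Ric} and formula \rf{def:Th}. To apply it we take $M=I$ and $N=I_{m+n}$, and note that $N+D^\top PD=\diag(I_m,I_n+P)$ is invertible. By the equivalences noted at the start of this section, this is the same as stabilizability.

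The main obstacle is mostly bookkeeping: making sure \autoref{lmm:ACBD} is applied with the augmented control $(u,z)$, and that the notion of stabilizer matches. The closed-loop system is $[A+B\Th_1+C\Th_2,\;\Th_2]$ with $z=\Th_2X$. The estimate on $[0,1]$ is a routine Gronwall/Itô argument.
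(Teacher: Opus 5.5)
Your proposal is correct and follows the paper's proof essentially verbatim: use exact (null-)controllability to steer $x$ to $0$ at time $T$, extend $(u,z)$ by zero so the state stays at $0$ afterwards, and then invoke condition (iii) of \autoref{lmm:ACBD}. You also make explicit, accurately, something the paper leaves implicit: that \rf{SDE:state} is the system $[A,0;(B,C),(0,I)]$ with augmented control $(u,z)$, and that $N+D^\top PD=\diag(I_m,I_n+P)$ is invertible.
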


\begin{proof}
Since the system \rf{SDE:state} is exactly controllable, for every $x\in\dbR^n$, 
there exists a control $(u(\cd),z(\cd))\in L^2_\dbF(0,T;\dbR^m)\times L^2_\dbF(0,T;\dbR^n)$ 
such that $X(T;x,u(\cd),z(\cd))=0$. 
Extend $(u(\cd),z(\cd))$ to $[0,\i)$ by setting
$$
\bu(t) \deq \begin{cases} u(t), & 0 \les t\les T, \\ 0, & t>T,\end{cases} \q 
\bz(t) \deq \begin{cases} z(t), & 0 \les t\les T, \\ 0, & t>T.\end{cases}
$$
Then the corresponding solution satisfies $X(t;x,\bu(\cd),\bz(\cd))=0$ for all $t\ges T$, 
and hence
$$
\dbE\int_0^\i|X(t;x,\bu(\cd),\bz(\cd))|^2dt = \dbE\int_0^T|X(t;x,u(\cd),z(\cd))|^2dt <\i.
$$
Therefore, system \rf{SDE:state} is stabilizable by \autoref{lmm:ACBD}.
\end{proof}

We next establish several preliminary results toward the Hautus-type characterization 
of stabilizability. Recall the operator $\cL_{\scp(A,C)}$ and its adjoint $\cL_{\scp(A,C)}^*$:
$$
\cL_{\scp(A,C)}(M) = MA + A^\top M + C^\top MC, \q 
\cL_{\scp(A,C)}^*(M) = AM + MA^\top + CMC^\top, \q M\in\dbS^n. 
$$
The next lemma gives a stochastic representation of the semigroup generated 
by $\cL_{\scp(A,C)}^*$.

\begin{lemma}\label{lmm:Tt(S)}
Let $(T_t)_{t\ges0}$ be the uniformly continuous semigroup on $\dbS^n$  
generated by $\cL_{\scp(A,C)}^*$, i.e.,
$$
T_t=e^{t\cL_{\scp(A,C)}^*}, \q t\ges0. 
$$
Let $\varPi(\cd)$ be the solution of the matrix SDE \rf{matrix-SDE}. Then 
\begin{equation}\label{eqn:Tt(S)}
T_t(S) = \dbE[\varPi(t)S\varPi(t)^\top], \q\forall S\in\dbS^n. 
\end{equation}
In particular, $T_t$ is positive, that is, 
$$
S \ges 0 \q\Longrightarrow\q T_t(S) \ges 0.
$$
\end{lemma}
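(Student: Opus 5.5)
The plan is to show that $S(t)\deq\dbE[\varPi(t)S\varPi(t)^\top]$ solves the same linear ODE on $\dbS^n$ as $T_t(S)$, with the same initial value. Uniqueness of solutions to linear ODEs in the finite-dimensional space $\dbS^n$ then identifies the two. Fix $S\in\dbS^n$ and apply It\^o's rule to the matrix process $\varPi(t)S\varPi(t)^\top$, using $d\varPi=A\varPi\,dt+C\varPi\,dW$ from \rf{matrix-SDE}. The product rule gives
\begin{align*}
d\big(\varPi S\varPi^\top\big)
&= (d\varPi)S\varPi^\top+\varPi S(d\varPi)^\top+(d\varPi)S(d\varPi)^\top\\
&= \big[A\varPi S\varPi^\top+\varPi S\varPi^\top A^\top+C\varPi S\varPi^\top C^\top\big]dt
 +\big[C\varPi S\varPi^\top+\varPi S\varPi^\top C^\top\big]dW(t).
\end{align*}

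Next I take expectations. The stochastic integral is a true martingale, not merely a local one, because $\dbE\sup_{0\les s\les t}|\varPi(s)|^4<\i$; this is a standard moment estimate for linear SDEs with constant coefficients. Writing $S(t)$ as above, I therefore get
$$
S(t)=S+\int_0^t\big[AS(r)+S(r)A^\top+CS(r)C^\top\big]dr
=S+\int_0^t\cL^*_{\scp(A,C)}(S(r))\,dr,
$$
where the last equality is \autoref{lmm:adjoint-cL}. This step also shows that $S(\cd)$ is continuous, and hence $C^1$. So $S(\cd)$ solves $\dot S=\cL^*_{\scp(A,C)}(S)$ with $S(0)=S$. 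Since $\cL^*_{\scp(A,C)}$ is a bounded linear operator on the finite-dimensional space $\dbS^n$, this problem has the unique solution $e^{t\cL^*_{\scp(A,C)}}S=T_t(S)$, which yields \rf{eqn:Tt(S)}.

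Positivity is then immediate. If $S\ges0$, then for every $x\in\dbR^n$,
$$
x^\top T_t(S)x=\dbE\big[(\varPi(t)^\top x)^\top S(\varPi(t)^\top x)\big]\ges0 .
$$
The only point needing care is justifying that the expectation of the $dW$ term vanishes, which the fourth-moment bound handles. Alternatively, one can localize with stopping times and pass to the limit by dominated convergence using the same bound. Everything else is routine.
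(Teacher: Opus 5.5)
Your proof is correct and follows essentially the paper's route: you show that $\dbE[\varPi(t)S\varPi(t)^\top]$ solves $\dot P=\cL^*_{\scp(A,C)}(P)$ with $P(0)=S$, and conclude by uniqueness for this linear ODE on $\dbS^n$. The differences are only cosmetic. You apply It\^o's rule directly to $\varPi S\varPi^\top$ for arbitrary $S\in\dbS^n$ and explicitly justify the martingale property, whereas the paper works with $X(t;x)X(t;x)^\top=\varPi(t)xx^\top\varPi(t)^\top$ and extends to general $S$ by writing it as a difference of sums of rank-one matrices and using linearity.
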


\begin{proof}
Write $\cL^*=\cL_{\scp(A,C)}^*$, and let $P(\cd\,;S)$ be the solution of
\begin{equation*}\left\{\begin{aligned}
\dot P(t) &= \cL^*(P(t)) = AP(t) + P(t)A^\top + CP(t)C^\top, \q t\ges0, \\
     P(0) &= S\in\dbS^n.
\end{aligned}\right.\end{equation*}
Then
$$
P(t;S)=T_t(S), \q\forall S\in\dbS^n.
$$
Now let $X(\cd\,;x)$ be the solution of
$$\left\{\begin{aligned}
dX(t) &= AX(t)dt + CX(t)dW(t), \q t\ges0, \\
 X(0) &= x\in\dbR^n.
\end{aligned}\right.$$
Since $X(t;x)=\varPi(t)x$, It\^o's formula gives
$$
\frac{d}{dt}\dbE[X(t;x)X(t;x)^\top] = \cL^*\big(\dbE[X(t;x)X(t;x)^\top]\big),
\q \dbE[X(0;x)X(0;x)^\top]=xx^\top.
$$
By uniqueness of solutions to the above ODE,
\begin{equation}\label{P(t;xx)}
P(t;xx^\top)=\dbE[\varPi(t)xx^\top\varPi(t)^\top], \q\forall x\in\dbR^n.
\end{equation}
Finally, every $S\in\dbS^n$ can be written as 
$$
S=\sum_{k=1}^n x_kx_k^\top-\sum_{k=1}^n y_ky_k^\top
$$
for some $x_k,y_k\in\dbR^n$. By linearity and \rf{P(t;xx)},
\begin{align*}
P(t;S)
&=\sum_{k=1}^n P(t;x_kx_k^\top)-\sum_{k=1}^n P(t;y_ky_k^\top) =\dbE[\varPi(t)S\varPi(t)^\top].
\end{align*}
Hence \eqref{eqn:Tt(S)} holds. The positivity of $T_t$ is then immediate.
\end{proof}

Recall from \autoref{lmm:equi-stable} that the stability of system $[A,C]$ is equivalent to
$$
\exists P>0\quad\text{such that}\quad \cL_{\scp(A,C)}(P)<0.
$$
The following proposition provides a useful counterpart for the opposite-sign condition
$$
\exists P>0\quad\text{such that}\quad \cL_{\scp(A,C)}(P)>0,
$$
and will play a key role in the proof of the stabilizability criterion.

\begin{proposition}\label{prop:P>0+LP>0} 
The following statements are equivalent:
\begin{enumerate}[\rm(i)] 
\item There exists $P>0$ such that $\cL_{\scp(A,C)}(P)>0$.  
\item If $S\in\bar\dbS^n_+$ and $\cL^*_{\scp(A,C)}(S)\les0$, then $S=0$.
\item If $H\in\bar\dbS^n_+\setminus\{0\}$ and $\cL^*_{\scp(A,C)}(H)=\l H$
      for some $\l\in\dbR$, then $\l>0$.    
\end{enumerate} 
\end{proposition}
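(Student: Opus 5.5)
We will prove (i) $\Ra$ (ii) $\Ra$ (iii) $\Ra$ (i), writing $\cL=\cL_{\scp(A,C)}$ and $\cL^*=\cL^*_{\scp(A,C)}$.

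\textbf{(i) $\Ra$ (ii).} Take $P>0$ with $\cL(P)>0$, and let $S\in\bar\dbS^n_+$ satisfy $\cL^*(S)\les0$. Then
$$
0<\lan \cL(P),S\ran=\lan P,\cL^*(S)\ran\les0
$$
whenever $S\ne0$. Indeed, the trace of the product of a positive definite matrix with a nonzero positive semidefinite matrix is positive. The right-hand inequality holds because $P>0$ and $-\cL^*(S)\ges0$. Hence $S=0$.

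\textbf{(ii) $\Ra$ (iii).} Let $H\in\bar\dbS^n_+\setminus\{0\}$ with $\cL^*(H)=\l H$. If $\l\les0$, then $\cL^*(H)=\l H\les0$, and (ii) forces $H=0$, a contradiction. Hence $\l>0$.

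\textbf{(iii) $\Ra$ (i).} This is the substantive direction; it is a Perron--Frobenius/Krein--Rutman argument for the positive semigroup $T_t=e^{t\cL^*}$ of \autoref{lmm:Tt(S)}. Since $T_t$ leaves $\bar\dbS^n_+$ invariant, the spectral bound
$$
\mu\deq\max\{\Re z: z\in\si(\cL^*)\}
$$
is itself an eigenvalue of $\cL^*$ with an eigenmatrix $H\in\bar\dbS^n_+\setminus\{0\}$. To see this, apply Krein--Rutman to $T_1$ (or to a resolvent $(\a I-\cL^*)^{-1}$ for large $\a$, which is positive as a Laplace transform of $T_t$). This gives an eigenmatrix $H\in\bar\dbS^n_+\setminus\{0\}$ with eigenvalue equal to the spectral radius, and the spectral mapping then transfers it to $\cL^*(H)=\mu H$. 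Alternatively, apply \autoref{prop:L(P)=rP} to the adjoint pair: $\cL^*$ has the form $\cL_{\scp(A^\top,C^\top)}$, but that proposition only guarantees \emph{some} real eigenvalue, so we need the resolvent version to pin it to the spectral bound. By (iii), $\mu>0$. Therefore every eigenvalue of $-\cL^*$ has negative real part, i.e.\ $-\cL^*=\cL^*_{\scp(-A,-C)}$ up to sign conventions generates an exponentially decaying semigroup.

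We then need a positive solution. Consider the reversed semigroup $e^{-t\cL^*}$, which decays exponentially but need not be positive. To avoid positivity issues, work on the dual side instead. Since $\si(\cL)=\si(\cL^*)$ lies in $\{\Re z\ges\mu>0\}$, $\cL$ is invertible. Set
$$
P\deq\int_0^\i e^{-t\cL}(I)\,dt=\cL^{-1}(I),
$$
so that $\cL(P)=I>0$. It remains to show $P>0$, and this is where the main obstacle lies, since $e^{-t\cL}$ is not a positive map. We plan to circumvent this by a cone-duality argument. Choose $Q>0$ with $\cL(Q)>0$ via a separation argument: the convex cone
$$
\{\cL(Q):Q>0\}
$$
meets the open cone $\dbS^n_+$. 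Otherwise, by Hahn--Banach there exists $S\ne0$ with $\lan S,\cL(Q)\ran\les0$ for all $Q>0$ and $\lan S,R\ran\ges0$ for all $R>0$. The second condition gives $S\in\bar\dbS^n_+$, and the first gives $\cL^*(S)\les0$. So (iii) $\Ra$ (i) reduces to (iii) $\Ra$ (ii).

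For (iii) $\Ra$ (ii), suppose $S\in\bar\dbS^n_+\setminus\{0\}$ satisfies $\cL^*(S)\les0$. Then $\frac{d}{dt}T_t(S)=T_t(\cL^*(S))\les0$ by positivity of $T_t$, so $0\les T_t(S)\les S$ for all $t\ges0$. Pairing with the Perron eigenmatrix $\wh H\ges0$ of $\cL$ with eigenvalue $\mu$ (obtained likewise by Krein--Rutman, since $e^{t\cL}$ is also positive) gives
$$
e^{\mu t}\lan \wh H,S\ran=\lan \wh H,T_t(S)\ran\les\lan \wh H,S\ran,
$$
hence $\lan\wh H,S\ran=0$ because $\mu>0$. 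This is not quite a contradiction, since $\wh H$ may be singular. To finish, restrict to the face of $\bar\dbS^n_+$ generated by $S$ and its $T_t$-orbit: the boundedness of $T_t(S)$ on the invariant face yields, again by Krein--Rutman applied to the restriction, an eigenmatrix $H\ges0$ in that face with $\cL^*(H)=\l H$ and $\l\les0$, contradicting (iii). This last face-restriction step is the expected main obstacle and will need the most care.
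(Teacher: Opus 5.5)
Your final skeleton is exactly the paper's: (i)$\Rightarrow$(ii) by pairing, (ii)$\Rightarrow$(iii) trivially, (ii)$\Rightarrow$(i) by separating $\{\cL(Q):Q>0\}$ from $\dbS^n_+$, and (iii)$\Rightarrow$(ii) via $0\les T_t(S)\les S$ plus a Perron argument on the face generated by $S$. As written, however, the proposal has two problems.

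First, the opening of your (iii)$\Rightarrow$(i) paragraph is false and should be deleted, not just bypassed. Positivity of the spectral \emph{bound} $\mu$ (the largest real part) says nothing about the rest of the spectrum. So neither $\si(\cL)\subseteq\{\Re z\ges\mu\}$ nor invertibility of $\cL$ follows. Take $A=\frac12 I$, $C=\diag(1,-1)$. Then $\cL(M)=M+CMC$, so $\cL(I)=2I>0$ and (i)--(iii) hold, yet $\cL$ annihilates the indefinite matrix $\begin{pmatrix}0&1\\1&0\end{pmatrix}$. This is the same phenomenon as in the paper's Example, where only positive semidefinite eigenmatrices are constrained. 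The pairing with $\wh H$ is correct but, as you note, gives nothing.

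Second, the step you defer as ``the main obstacle'' is the whole content of the paper's proof of (iii)$\Rightarrow$(ii). So the substantive direction is not yet established. Three ingredients are needed.

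(a) Invariance and contraction. Let $\cI_{\scS}=\{X\ges0:\im X\subseteq\im S\}$ and $V_{\scS}=\cI_{\scS}-\cI_{\scS}$. Positivity and $T_t(S)\les S$ give $-\a S\les X\les\a S\Rightarrow-\a S\les T_t(X)\les\a S$. Hence $T_t$ leaves $V_{\scS}$ and $\cI_{\scS}$ invariant; the latter already contains the orbit of $S$, so nothing needs to be added to the face. It is also a contraction for the order-unit norm of $S$. Boundedness of the single orbit $T_t(S)$ controls all of $V_{\scS}$ only because $S$ is an order unit there.

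(b) A positive eigenvector of the restriction $\wt T_t$. The paper applies Brouwer to $X\mapsto T_t(X)/\tr[T_t(X)]$ on the trace-one slice of $\cI_{\scS}$. This gives $T_t(H_t)=\b_tH_t$ with $H_t\in\cI_{\scS}\setminus\{0\}$ and $0<\b_t\les1$. Finite-dimensional Krein--Rutman on the closed cone $\cI_{\scS}\cong\bar\dbS^r_+$, $r=\rank S$, does the same.

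(c) The passage from $T_t$ to $\cL^*$, which is not automatic by spectral mapping for a fixed $t$. The kernel $\ker(T_t-\b_tI)$ contains the eigenvectors of every $\l\in\si(\cL^*)$ with $e^{t\l}=\b_t$, and such $\l$ can differ by nonzero elements of $(2\pi i/t)\mathbb{Z}$. The paper chooses $t$ with $e^{t\l_i}\ne e^{t\l_j}$ for distinct $\l_i,\l_j\in\si(\cL^*)$. This yields $\cL^*(H_t)=t^{-1}(\ln\b_t)H_t$ with $t^{-1}\ln\b_t\les0$. Alternatively, differentiate at $t=0$ to get $\cL^*(V_{\scS})\subseteq V_{\scS}$. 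Then apply Krein--Rutman to the positive operator $\int_0^\i e^{-\a t}\wt T_t\,dt=(\a-\cL^*|_{V_{\scS}})^{-1}$, $\a>0$. Its Perron eigenvector is an $\cL^*$-eigenvector in $\cI_{\scS}$ whose real eigenvalue lies in $\si(\cL^*|_{V_{\scS}})\subseteq\{\Re z\les0\}$ by (a).

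Either way you obtain $H\in\bar\dbS^n_+\setminus\{0\}$ with $\cL^*(H)=\l H$ and $\l\les0$, contradicting (iii).
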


\begin{proof}
For simplicity, we write $\cL=\cL_{\scp(A,C)}$ and $\cL^*=\cL_{\scp(A,C)}^*$. 

\ms  
 
(i) $\Ra$ (ii): Suppose that $\cL(P)>0$ for some $P>0$. 
If there existed $S\in\bar\dbS^n_+\setminus\{0\}$ such that $\cL^*(S)\les0$, then
$$
0<\lan \cL(P),S\ran=\lan P,\cL^*(S)\ran\les0,
$$
a contradiction.

\ms   

(ii) $\Ra$ (i): Suppose, to the contrary, that there exists no $P>0$ such that
$\cL(P)>0$. Then the convex sets
$$
\dbS^n_+=\{X\in\dbS^n:X>0\} \q\text{and}\q  \cC\deq \cL(\dbS^n_+)
$$
are disjoint. Since $\dbS^n$ is finite-dimensional and $\dbS^n_+$ is open, the separation
theorem (see \cite[page 11, Corollary 1.2.10]{Cheridito2013}) yields a nonzero $S\in\dbS^n$ 
and $\a\in\dbR$ such that
\begin{align}\label{<SX><SY>}
\lan S,X\ran \les \a, \q\forall X\in\dbS^n_+,  \qq\text{and}\qq
\lan S,Y\ran \ges \a, \q\forall Y\in\cC.  
\end{align}
Fix $X_0\in\dbS^n_+$. Since $tX_0\in\dbS^n_+$ for all $t>0$, \rf{<SX><SY>} implies
$$
t\lan S,X_0\ran=\lan S,tX_0\ran\les \a, \q\forall t>0.
$$
Letting $t\to0$ yields $\a\ges0$, while letting $t\to\i$ shows that
$\lan S,X_0\ran\les0$. Since $X_0\in\dbS^n_+$ is arbitrary, we obtain
$$
\lan S,X\ran\les0,\q\forall X\in\dbS^n_+,
$$
and hence $S\les0$. On the other hand, \rf{<SX><SY>}  and $\a\ges0$ imply
$$
\lan \cL^*(S),X\ran=\lan S,\cL(X)\ran\ges0,\q\forall X\in\dbS^n_+.
$$
Thus $\cL^*(S)\ges0$. It follows that 
$$
-S\ges0, \q \cL^*(-S)\les0, \q -S\ne0,
$$
which contradicts (ii).

\ms 

(ii) $\Ra$ (iii): This is immediate.

\ms 

(iii) $\Ra$ (ii): We argue by contradiction. 
Suppose that there exists $S\in\bar\dbS^n_+\setminus\{0\}$ such that $\cL^*(S)\les0$. 

\ms 

{\it Step 1}. Let $(T_t)_{t\ges0}$ be the semigroup generated by $\cL^*$. 
Since $\cL^*(S)\les0$, \autoref{lmm:Tt(S)} implies   
\begin{equation}\label{T(L(S))<0}
T_t(\cL^*(S)) \les0, \q\forall t\ges0.
\end{equation}
Because $\cL^*$ and $T_t$ commute, we have 
$$
{d\over dt}T_t(S) = \cL^*(T_t(S)) = T_t(\cL^*(S)), \q T_0(S) = S.
$$
Consequently, by integrating the above identity and using \rf{T(L(S))<0}, we obtain
\begin{equation}\label{T(S)<S}
T_t(S) = S + \int_0^t T_s\big(\cL^*(S)\big) ds \les S, \q\forall t\ges0. 
\end{equation}

{\it Step 2}. Define the following sets:   
\begin{align*}
\cI_{\scS} &\deq \big\{X\in\bar\dbS^n_+ : \exists\,\a\ges0 \text{ such that } X \les \a S\big\}
              = \big\{X\in\bar\dbS^n_+ : \im X \subseteq \im S\big\}, \\
V_{\scS} &\deq \big\{X\in\dbS^n : \im X \subseteq \im S\big\}.
\end{align*}
Clearly, $\cI_{\scS}$ is a cone contained in the linear subspace $V_{\scS}$. Moreover, 
\begin{equation}\label{V=cI-cI}
V_{\scS} = \cI_{\scS} - \cI_{\scS}.
\end{equation}
By \rf{T(S)<S} and the positivity of $T_t$, for any $X\in\cI_{\scS}$, i.e., 
$0\les X\les\a S$ for some $\a\ges0$, we have 
$$
0 \les T_t(X) \les \a T_t(S) \les \a S.
$$
Consequently, combining this with \rf{V=cI-cI}, we deduce the invariance property
\begin{equation}\label{eq:V-invariance}
T_t(\cI_{\scS})\subseteq \cI_{\scS}, \q T_t(V_{\scS})\subseteq V_{\scS}, \q\forall t\ges0. 
\end{equation}
Now equip $V_{\scS}$ with the order-unit norm induced by $S$: 
$$
\|X\|_{\scS} \deq \inf\{\a\ges0 : -\a S \les X \les \a S\}.
$$
Let $\wt T_t$ be the restriction of $T_t$ to $V_{\scS}$.  
By the positivity of $T_t$ and \rf{T(S)<S}, we have
$$
-\a S \les X \les \a S \q\Longrightarrow\q -\a S \les T_t(X) \les \a S; \qq\forall X\in V_{\scS}.
$$
Taking the infimum over admissible $\a$ yields
$$
\|\wt T_t(X)\|_{\scS} \les \|X\|_{\scS}, \q\forall\, X\in V_{\scS}.
$$
Consequently, the operator norm of $\wt T_t$ on $(V_{\scS},\|\cd\|_{\scS})$ satisfies
$\|\wt T_t\|_{\scp V_{\scalebox{0.4}{$S$}} \to V_{\scalebox{0.4}{$S$}}} \les 1$,
and hence the spectral radius of $\wt T_t$ satisfies 
\begin{equation}\label{r(wt-Tt)<1}
r(\wt T_t) \les 1.
\end{equation}

{\it Step 3}. Consider the nonempty, compact, convex subset of $(V_{\scS},\|\cd\|_{\scS})$:
$$
K \deq \{X\in\cI_{\scS} : \tr(X)=1 \}.
$$
The following holds:
\begin{equation}\label{tr(TX)>0}
\tr[T_t(X)]>0, \q\forall X\in K.
\end{equation}
Indeed, $X\in K$ implies that $X\ges0$ and $\tr(X)=1$, hence $X\ne0$.
Since $T_t$ is positive, we have $T_t(X)\ges0$, and hence $\tr[T_t(X)]\ges0$. 
If $\tr[T_t(X)]=0$, then $T_t(X)=0$. Since $T_t$ is invertible, this yields $X=0$, a contradiction.  
Properties \rf{tr(TX)>0} and \rf{eq:V-invariance} allow us to define, for each $t\ges0$, 
a continuous mapping $f_t:K\to K$ by 
$$
f_t(X) \deq \frac{T_t(X)}{\tr[T_t(X)]}, \q X\in K. 
$$
By Brouwer's fixed-point theorem, 
there exists $H_t\in K$ such that $f_t(H_t)=H_t$, that is, 
$$ 
T_t(H_t)=\tr[T_t(H_t)]H_t.
$$
Set $\b_t\deq\tr[T_t(H_t)]>0$. Since $H_t\in K\subseteq V_{\scS}$, 
the above identity shows that $(\b_t, H_t)$ is an eigenpair of the restriction
$\wt T_t$ of $T_t$ to $V_{\scS}$. Consequently, by \rf{r(wt-Tt)<1}, 
$$
0<\b_t\les r(\wt T_t) \les 1.
$$
 
{\it Step 4}. 
From Step 3, we see that for each $t\ges0$, $T_t$ admits an eigenpair $(\b_t,H_t)$ satisfying 
\begin{equation}\label{eq:H-eigen} 
0<\b_t\les 1, \q T_t(H_t)=\b_t H_t, \q H_t\in\bar\dbS^n_+\setminus\{0\}. 
\end{equation}
Let $\si(\cL^*)=\{\l_1,\dots,\l_m\}$ be the spectrum of $\cL^*$. By the spectral mapping theorem, we have  
$$
\si(T_t) = \si\big(e^{t\cL^*}\big) = \big\{e^{t\l_1},\dots,e^{t\l_m}\big\}.  
$$
Choose $t>0$ such that 
\begin{equation}\label{eq:noncollision}
e^{t\l_i} \neq e^{t\l_j}, \q\forall i\neq j.
\end{equation}
Then each eigenvalue of $T_t$ has the form $e^{t\mu}$ for a unique $\mu\in\si(\cL^*)$. Moreover,
\begin{equation}\label{eq:kernel}
\ker(e^{t\cL^*}-e^{t\mu}I) = \ker(\cL^*-\mu I).
\end{equation}
From \rf{eq:H-eigen} we have $e^{t\cL^*}H_t=\b_tH_t$.  
Writing $\b_t=e^{t\mu}$, where $\mu$ is uniquely determined by \rf{eq:noncollision}, 
and using the kernel identity \rf{eq:kernel}, we obtain
\begin{equation}\label{eq:Lstar-eigen}
\cL^*(H_t)=\mu H_t.
\end{equation}
Since $0<\b_t\les1$, it follows that $\mu=\tfrac{1}{t}\ln\b_t\les0$. Thus,
$$
H_t\in\bar\dbS^n_+\setminus\{0\}, \q \cL^*(H_t)=\mu H_t, \q \mu\les0,
$$
which contradicts condition (iii). Therefore, (ii) holds.  
\end{proof}

We next consider the special case $B=0$, which serves as a basic building block 
for the general stabilizability analysis of system \rf{SDE:state}.

\begin{proposition}\label{prop:stabilizable:B=0}
The following statements are equivalent:
\begin{enumerate}[\rm(i)]
\item The system 
      \begin{equation}\label{SDE:B=0}\left\{\begin{aligned}
      dX(t) &= [AX(t)+Cz(t)]dt + z(t)dW(t),  \q t\ges 0, \\
       X(0) &= x 
      \end{aligned}\right.\end{equation}
      is stabilizable. 
\item There exists $P\in\dbS^n_+$ such that $\cL_{\scp(-A,C)}^*(P)>0$.
\item If $S\in\bar\dbS^n_+$ and $\cL_{\scp(-A,C)}(S)\les0$, then $S=0$. 
\item If $H\in\bar\dbS^n_+\setminus\{0\}$ and $\cL_{\scp(-A,C)}(H)=\l H$
      for some $\l\in\dbR$, then $\l>0$. 
\end{enumerate}
\end{proposition}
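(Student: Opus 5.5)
The plan is to treat the two halves separately. The equivalence (ii)$\iff$(iii)$\iff$(iv) should follow from \autoref{prop:P>0+LP>0} applied to a suitably transposed pair. The equivalence (i)$\iff$(ii) should follow from a congruence/Schur-type completion of squares applied to the Lyapunov inequality of \autoref{lmm:equi-stable}(iv) for the closed-loop system.

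\emph{Step 1: (ii)$\iff$(iii)$\iff$(iv).} Set $A'\deq-A^\top$ and $C'\deq C^\top$. Then
$$
\cL_{\scp(A',C')}(M)=MA'+A'^\top M+C'^\top MC'=-MA^\top-AM+CMC^\top=\cL^*_{\scp(-A,C)}(M).
$$
By \autoref{lmm:adjoint-cL},
$$
\cL^*_{\scp(A',C')}(M)=A'M+MA'^\top+C'MC'^\top=-A^\top M-MA+C^\top MC=\cL_{\scp(-A,C)}(M).
$$
Hence statements (i), (ii), (iii) of \autoref{prop:P>0+LP>0} for the pair $(A',C')$ are literally statements (ii), (iii), (iv) here, and this step needs no further work.

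\emph{Step 2: (i)$\Ra$(ii).} System \rf{SDE:B=0} is the system $[A,0;C,I]$ with control $z$. A stabilizer is therefore a $\Th\in\dbR^{n\times n}$ such that $[A+C\Th,\Th]$ is stable. By \autoref{lmm:equi-stable}(iv) there is $P>0$ with
$$
P(A+C\Th)+(A+C\Th)^\top P+\Th^\top P\Th<0.
$$
Put $Q\deq P^{-1}>0$ and $K\deq\Th Q$, and pre- and post-multiply the inequality by $Q$. This gives
$$
AQ+QA^\top+CK+K^\top C^\top+K^\top Q^{-1}K<0.
$$
Completing the square,
$$
K^\top Q^{-1}K+CK+K^\top C^\top=(K+QC^\top)^\top Q^{-1}(K+QC^\top)-CQC^\top.
$$
The first term on the right is $\ges0$, so $AQ+QA^\top-CQC^\top<0$, that is, $\cL^*_{\scp(-A,C)}(Q)>0$ with $Q\in\dbS^n_+$.

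\emph{Step 3: (ii)$\Ra$(i).} Reverse the computation. Given $Q>0$ with $\cL^*_{\scp(-A,C)}(Q)>0$, choose $K\deq-QC^\top$, which makes the square vanish, so $\Th\deq KQ^{-1}=-QC^\top Q^{-1}$. With $P\deq Q^{-1}$, the same congruence shows that $P(A+C\Th)+(A+C\Th)^\top P+\Th^\top P\Th<0$. By \autoref{lmm:equi-stable}, $[A+C\Th,\Th]$ is stable, so \rf{SDE:B=0} is stabilizable.

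The only delicate point is getting this congruence identity right: the diffusion term $\Th^\top P\Th$ becomes $K^\top Q^{-1}K$ after multiplication by $Q$, and the signs must combine so that $-CQC^\top$ appears. Everything else is a direct invocation of \autoref{prop:P>0+LP>0} and \autoref{lmm:equi-stable}.
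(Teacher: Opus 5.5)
Your proposal is correct and follows essentially the same route as the paper. It obtains (ii)$\iff$(iii)$\iff$(iv) from \autoref{prop:P>0+LP>0}, and (i)$\iff$(ii) by completing the square in the closed-loop Lyapunov inequality together with a congruence by the inverse matrix, which yields the same stabilizer $\Th=-QC^\top Q^{-1}$. The only differences are cosmetic: you spell out the substitution $(A,C)\mapsto(-A^\top,C^\top)$ that the paper leaves implicit when it invokes \autoref{prop:P>0+LP>0}, and you apply the congruence before completing the square (in the variable $K=\Th Q$), whereas the paper completes the square first.
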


\begin{proof}
It suffices to prove that (i) and (ii) are equivalent, since the remaining equivalences 
follow directly from \autoref{prop:P>0+LP>0}. 
By definition and \autoref{lmm:equi-stable}, system \rf{SDE:B=0} is stabilizable 
if and only if there exist $\Si\in\dbS^n_+$ and $\Th\in\dbR^{n\times n}$ such that 
\begin{equation}\label{Si:SDE-B=0}
\Si A+ A^\top\Si + \Si C\Th +\Th^\top C^\top\Si + \Th^\top\Si\Th
=\Si(A+C\Th) + (A+C\Th)^\top\Si + \Th^\top\Si\Th <0. 
\end{equation}

(i) $\Ra$ (ii): Suppose that \rf{Si:SDE-B=0} holds for some 
$(\Si,\Th)\in\dbS^n_+\times\dbR^{n\times n}$. Using
$$
\Si C\Th + \Th^\top C^\top\Si + \Th^\top\Si\Th 
= (\Th^\top + \Si C\Si^{-1})\Si(\Th + \Si^{-1}C^\top\Si) - \Si C\Si^{-1}C^\top\Si,
$$
we obtain from \rf{Si:SDE-B=0} that 
$$
\Si A + A^\top\Si - \Si C\Si^{-1}C^\top\Si 
< -(\Th^\top + \Si C\Si^{-1})\Si(\Th + \Si^{-1}C^\top\Si) \les 0.
$$
Pre- and post-multiplying the above by $P\deq\Si^{-1}$, we get 
\begin{equation}\label{25-10-22:1}
-\cL_{\scp(-A,C)}^*(P) = AP + P A^\top -CPC^\top  <0.
\end{equation}

(ii) $\Ra$ (i): Conversely, suppose that \rf{25-10-22:1} holds for some $P>0$.
Let $\Si \deq P^{-1}$ and $\Th \deq -\Si^{-1}C^\top\Si$. Then 
$$
\Si A + A^\top\Si - \Si C\Si^{-1}C^\top\Si <0, \q 
\Si C\Th +\Th^\top C^\top\Si + \Th^\top\Si\Th = -\Si C\Si^{-1}C^\top\Si.
$$
It follows that \rf{Si:SDE-B=0} holds.   
\end{proof}

To establish the Hautus-type characterization of stabilizability for system \rf{SDE:state}, 
we next prove the final reduction step: under the controllability decomposition, 
the stabilizability of system \rf{SDE:state} is equivalent to that of the uncontrollable 
subsystem \rf{eq:subsystem}.

\begin{proposition}\label{prop:stable-decomposition}
Suppose that $0<k\deq \dim V_{n-1}<n$, where $V_{n-1}$ is defined by \rf{def:Vk}. 
Let $P\in\dbR^{n\times n}$ be the orthogonal matrix satisfying \rf{Controllable-Form}. 
Then system \rf{SDE:state} is stabilizable if and only if the subsystem
\begin{equation}\label{eq:subsystem}\left\{\begin{aligned}
dY_2(t) &= [A_{22}Y_2(t) + C_{22}v_2(t)]dt + v_2(t)dW(t), \\
 Y_2(0) &= y_2\in\dbR^{n-k}
\end{aligned}\right.\end{equation}
is stabilizable.   
\end{proposition}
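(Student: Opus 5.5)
The plan is to pass to the coordinates of \autoref{thm:control-decomposition} and exploit the block-triangular structure. System \rf{SDE:state} is of the form $[A,0;\,(B,C),(0,I)]$ with the pair $(u,z)$ as control, so \autoref{lmm:ACBD} applies: stabilizability is equivalent to the open-loop condition (iii), namely that every initial state admits an $L^2$ control with $L^2$ state.

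\textbf{Change of coordinates.} Set $Y\deq P^\top X$ and $w\deq P^\top z$, and split $Y=(Y_1^\top,Y_2^\top)^\top$, $w=(w_1^\top,w_2^\top)^\top$. By \rf{Controllable-Form},
\begin{align*}
dY_1 &= [A_{11}Y_1+A_{12}Y_2+B_1u+C_{11}w_1+C_{12}w_2]dt+w_1dW,\\
dY_2 &= [A_{22}Y_2+C_{22}w_2]dt+w_2dW.
\end{align*}
The $Y_2$-equation is exactly \rf{eq:subsystem}, with $v_2=w_2$. It is decoupled from $Y_1$ and from $u$. Since $P$ is orthogonal, $L^2$-integrability of $(X,u,z)$ is equivalent to that of $(Y,u,w)$.

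\textbf{Necessity.} Suppose \rf{SDE:state} is stabilizable and fix $y_2\in\dbR^{n-k}$. Take $x\deq P(0,y_2^\top)^\top$. By \autoref{lmm:ACBD}(iii), there is an $L^2$ control $(u,z)$ whose state $X$ is in $L^2$; a stabilizer $\Th$ yields such a control directly. Then $w_2$ is an $L^2$ control for \rf{eq:subsystem} whose state $Y_2$ is in $L^2$. Applying \autoref{lmm:ACBD} to \rf{eq:subsystem}, again viewed as $[A_{22},0;C_{22},I]$, shows that the subsystem is stabilizable.

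\textbf{Sufficiency.} I will build a block-triangular feedback in two steps.
\begin{enumerate}[(a)]
\item Let $\Th_{22}$ stabilize \rf{eq:subsystem}, so that with $w_2=\Th_{22}Y_2$ the process $Y_2$ is exponentially mean-square stable by \autoref{lmm:equi-stable}.
\item The triple $(A_{11},C_{11},B_1)$ satisfies $\dim U_{n-1}=k$. By \autoref{prop:controllability-kehua} (in dimension $k$), the corresponding backward-structured system is exactly controllable. Note that \autoref{prop:V(n-1)=Vn} gives $U_{k-1}=U_{n-1}$, so the index $n-1$ is harmless. By \autoref{prop:control-stable}, this system is stabilizable, so there exist $\Th_1,\Th_1'$ such that, with $u=\Th_1Y_1$ and $w_1=\Th_1'Y_1$, the system $[A_{11}+B_1\Th_1+C_{11}\Th_1',\,\Th_1']$ is stable.
\end{enumerate}
The closed loop then reads
$$
dY_1=[\hat A_{11}Y_1+(A_{12}+C_{12}\Th_{22})Y_2]dt+\Th_1'Y_1dW,
$$
together with the stable $Y_2$-equation. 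Its diffusion term does not involve $Y_2$.

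\textbf{Main obstacle: the cascade estimate.} The step I expect to require care is showing that this cascade is $L^2$-stable. By \autoref{lmm:equi-stable}(iv), take $Q>0$ with $Q\hat A_{11}+\hat A_{11}^\top Q+\Th_1'^\top Q\Th_1'\les -2cI$ for some $c>0$. It\^o's formula gives
$$
\frac{d}{dt}\dbE[Y_1^\top QY_1]\les -2c\,\dbE|Y_1|^2+2\,\dbE\big[|QY_1|\,|(A_{12}+C_{12}\Th_{22})Y_2|\big].
$$
By Young's inequality, the last term is at most $c\,\dbE|Y_1|^2+K\dbE|Y_2|^2$ for some constant $K$. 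Integrating over $[0,T]$ and letting $T\to\i$, and using $\dbE\int_0^\i|Y_2|^2dt<\i$, yields $\dbE\int_0^\i|Y_1|^2dt<\i$. Hence the full closed-loop system $[\hat A,\hat C]$ is $L^2$-stable. Transforming back by $P$, this is a linear feedback making $[A+B\Th,\,C\Th']$-type closed loops stable, so \rf{SDE:state} is stabilizable.
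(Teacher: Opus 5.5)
Your proof is correct. It uses the same coordinates, the same block-diagonal $z$-feedback $\diag(\Th_1',\Th_{22})$, and the same appeal to \autoref{prop:control-stable} on the controllable part as the paper. The two proofs differ in how the cascade is closed.

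The paper builds an explicit Lyapunov certificate. With $P(\a)=\diag(\a P_{11},P_{22})$, the closed-loop Lyapunov expression is $\bigl(\begin{smallmatrix}\a\D_{11} & \a P_{11}\wh A_{12}\\ \a\wh A_{12}^{\top}P_{11} & \D_{22}\end{smallmatrix}\bigr)$, and a Schur-complement argument shows it is negative definite for small $\a$. This verifies condition (iv) of \autoref{lmm:equi-stable} purely algebraically. You instead verify condition (iii) directly: you treat $Y_2$ as an $L^2$ input to a stable $Y_1$-system and close the estimate with It\^o's formula and Young's inequality. The weight in your Young inequality is the analytic counterpart of the paper's scaling $\a\to0$. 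Your route is the standard input-to-state argument and avoids the Schur-complement computation. The paper's route yields a concrete block-diagonal Lyapunov matrix for the closed loop.

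On necessity you are more careful than the paper, which calls it obvious. A stabilizer of the full system may feed $Y_1$ into $v_2$, so its $(2,2)$ block need not stabilize \rf{eq:subsystem} by itself. Passing through the open-loop characterization \autoref{lmm:ACBD}(iii), as you do, is the right way to conclude. Your observation that $U_{k-1}=U_{n-1}$ by \autoref{prop:V(n-1)=Vn} likewise fills a small step the paper leaves implicit.

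One notational slip: with $u=\Th X$ and $z=\Th'X$, the closed loop of \rf{SDE:state} is $[A+B\Th+C\Th',\,\Th']$, not $[A+B\Th,\,C\Th']$.
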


\begin{proof}
Let
\begin{equation}\label{eqn:Y=PTX}
Y(t)=\bp Y_1(t)\\ Y_2(t)\ep \deq P^\top X(t),\q v(t)=\bp v_1(t)\\ v_2(t)\ep \deq P^\top z(t),
\end{equation}
where $Y_1(t),v_1(t)\in\dbR^k$ and $Y_2(t),v_2(t)\in\dbR^{n-k}$. Then, by \rf{Controllable-Form},
\begin{equation}\label{eqn:dY=}\left\{\begin{aligned}
dY_1(t) &=\big[A_{11}Y_1(t)+A_{12}Y_2(t)+B_1u(t)+C_{11}v_1(t)+C_{12}v_2(t)\big]dt + v_1(t)dW(t), \\
dY_2(t) &=\big[A_{22}Y_2(t)+C_{22}v_2(t)\big]dt + v_2(t)dW(t).
\end{aligned}\right.\end{equation}
Clearly, the stabilizability of systems \rf{SDE:state} and \rf{eqn:dY=} is equivalent.  
Indeed, $\bigl(\!\begin{smallmatrix}F\\K\end{smallmatrix}\!\bigr)$ is a stabilizer of 
system \rf{SDE:state} if and only if $\bigl(\!\begin{smallmatrix}FP\\P^\top\!KP\end{smallmatrix}\!\bigr)$ 
is a stabilizer of system \rf{eqn:dY=}.   

\ms 

If system \rf{eqn:dY=} is stabilizable, then its subsystem \rf{eq:subsystem} is obviously stabilizable.
Conversely, suppose that subsystem \rf{eq:subsystem} is stabilizable. Then there exists 
$K_2\in\dbR^{(n-k)\times(n-k)}$ such that the closed-loop system
$$
dY_2(t) = (A_{22} + C_{22}K_2)Y_2(t) dt + K_2Y_2(t) dW(t)
$$
is stable. Equivalently, there exists $P_{22}\in\dbS^{n-k}_+$ such that
$$
\D_{22}\deq P_{22}(A_{22} + C_{22}K_2)+(A_{22} + C_{22}K_2)^\top P_{22} + K_2^\top P_{22}K_2<0. 
$$
Moreover, by \autoref{thm:control-decomposition}, the system   
\begin{equation}\label{eq:A11B1C11}\left\{\begin{aligned}
dY_1(t) &= [A_{11}Y_1(t) + B_1u(t) + C_{11}v_1(t)]dt + v_1(t)dW(t), \\ 
 Y_1(0) &= y_1\in \dbR^k
\end{aligned}\right.\end{equation}
is exactly controllable and therefore stabilizable by \autoref{prop:control-stable}.  
Thus there exist $F_1\in\dbR^{m\times k}$, $K_1\in\dbR^{k\times k}$, and $P_{11}\in\dbS^{k}_+$ 
such that 
$$
\D_{11}\deq P_{11}(A_{11}+B_1F_1+C_{11}K_1)+(A_{11}+B_1F_1+C_{11}K_1)^\top P_{11}+K_1^\top P_{11}K_1<0.
$$
Now set  
\begin{align*}
\wh A= \bp \wh A_{11} & \wh A_{12} \\ 0 & \wh A_{22}\ep 
  \deq \bp A_{11} + B_1F_1 + C_{11}K_1 &  A_{12} + C_{12}K_2 \\ 0 &  A_{22} + C_{22}K_2\ep, \q
K \deq \bp K_1 & 0 \\ 0 & K_2\ep. 
\end{align*}
We claim that there exists $P\in\dbS^n_+$ such that 
$$
P\wh A +\wh A^{\,\top}P + K^\top PK <0, 
$$
and hence $\biggl(\!\begin{smallmatrix}F_1 & 0 \\ K_1 & 0 \\ 0 & K_2 \end{smallmatrix}\!\biggr)$ 
stabilizes system \rf{eqn:dY=}.   
To prove this, fix any $\a>0$ and consider the block-diagonal matrix
$$
P(\a)\deq\diag(\a P_{11},P_{22})\in\dbS^n_+.
$$
A straightforward computation yields 
$$
P(\a)\wh A + \wh A^{\,\top} P(\a) + K^\top P(\a)K
=\bp \a\D_{11} & \a P_{11}\wh A_{12} \\ \a\wh A_{12}^{\,\top} P_{11} & \D_{22}\ep. 
$$
Since $\D_{22}<0$, the Schur complement shows that this matrix is negative definite if and only if
$$
\a\(\D_{11}-\a P_{11}\wh A_{12}\D_{22}^{-1}\wh A_{12}^{\,\top} P_{11}\)=
\a\D_{11}-(\a P_{11}\wh A_{12})\D_{22}^{-1}(\a\wh A_{12}^{\,\top} P_{11})<0.
$$
Because $\D_{11}<0$, the above inequality holds for all sufficiently small $\a>0$. 
\end{proof}

We now present a Hautus-type characterization of stabilizability for 
the backward-structured stochastic system \rf{SDE:state}.

\begin{theorem}\label{thm:H-test-stability}
The following statements are equivalent:   
\begin{enumerate}[\rm(i)]
\item System \rf{SDE:state} is stabilizable.
\item For every $\l\les0$ and every $H\in\bar\dbS^n_+$, 
      $$
      \cL_{\scp(-A,C)}(H) = \l H, ~ B^\top H=0 \q\Rightarrow\q H=0. 
      $$ 
\end{enumerate}
\end{theorem}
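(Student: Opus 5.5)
The plan is to split according to $k\deq\dim V_{n-1}$ and reduce everything to the uncontrolled case, \autoref{prop:stabilizable:B=0}, through the controllability decomposition.

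\textbf{Extreme cases.} If $k=n$, the system is exactly controllable by \autoref{prop:controllability-kehua}, hence stabilizable by \autoref{prop:control-stable}. Condition (ii) holds as well, since by \autoref{thm:H-test-controllability} no nonzero $H\in\bar\dbS^n_+$ eigenmatrix of $\cL_{\scp(-A,C)}$ satisfies $B^\top H=0$, whatever the sign of $\l$. If $k=0$, then $B=0$, and (ii) reads: every nonzero positive semidefinite eigenmatrix of $\cL_{\scp(-A,C)}$ has eigenvalue $\l>0$. This is statement (iv) of \autoref{prop:stabilizable:B=0}. Since with $B=0$ the system \rf{SDE:state} is exactly \rf{SDE:B=0}, that proposition gives (i)$\iff$(ii) directly.

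\textbf{General case $0<k<n$.} Take the orthogonal $P$ from \autoref{thm:control-decomposition}. By \autoref{prop:stable-decomposition}, (i) is equivalent to stabilizability of the subsystem \rf{eq:subsystem}. By \autoref{prop:stabilizable:B=0} applied to $(A_{22},C_{22})$, this is in turn equivalent to:
\[
H_{22}\in\bar\dbS^{n-k}_+\setminus\{0\},\ \cL_{\scp(-A_{22},C_{22})}(H_{22})=\l H_{22}\ \Longrightarrow\ \l>0. \tag{$\ast$}
\]
It therefore remains to show (ii)$\iff(\ast)$.

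\emph{(ii)$\Ra(\ast)$.} Given a violating $H_{22}$ with $\l\les0$, embed it as $H\deq P\,\diag(0,H_{22})P^\top$, exactly as in the proof of \autoref{thm:H-test-controllability}. The same block computation gives $\cL_{\scp(-A,C)}(H)=\l H$ and $B^\top H=0$ with $H\ne0$, contradicting (ii).

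\emph{$(\ast)\Ra$(ii).} Let $H\in\bar\dbS^n_+$ satisfy $\cL_{\scp(-A,C)}(H)=\l H$, $B^\top H=0$ and $\l\les0$. Write $\wt H\deq P^\top HP=\bp H_{11}&H_{12}\\H_{12}^\top&H_{22}\ep\ges0$. The key claim is that $H_{11}=0$, and hence $H_{12}=0$ by positive semidefiniteness.

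Set $\wt A\deq P^\top AP$ and $\wt C\deq P^\top CP$. The relation $B^\top H=0$ gives $B_1^\top(H_{11},H_{12})=0$. Equivalently, $\im H\subseteq(\im B)^\perp$, i.e.\ $B^\top H=0$ says that $HB=0$.

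I would then show, by induction on word length, that $HMB=0$ for all words $M$ over $\{A,C\}$; this would give $HV_{n-1}=0$, i.e.\ $H_{11}=0$ and $H_{12}=0$. For words of length one, the eigen-relation gives
\[
B^\top(-HA-A^\top H+C^\top HC)B=\l B^\top HB=0.
\]
Since $HB=0$, the two middle terms vanish, so $B^\top C^\top HCB=0$. As $H\ges0$, this forces $HCB=0$. Knowing $HCB=0$ then gives $HAB=0$ by using $(-HA-A^\top H+C^\top HC)B=\l HB=0$, which reduces to $HAB=C^\top HCB=0$. The induction proceeds in the same way: suppose $HMB=0$ for all words of length at most $j$, and fix a word $M$ of length $j$. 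Apply the eigen-relation to $MB$ and sandwich with $(MB)^\top$ to get $HCMB=0$ from positivity. Then the relation itself yields
\[
HAMB=-A^\top HMB+C^\top HCMB=0.
\]
This inductive positivity argument is the step I expect to need the most care. The point is that the cone restriction $H\ges0$ is what makes it work: it is exactly what allows passing from $B^\top C^\top HCB=0$ to $HCB=0$. Note that it does not use the sign of $\l$.

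With $\wt H=\diag(0,H_{22})$, $H_{22}\ne0$, the block computation gives $\cL_{\scp(-A_{22},C_{22})}(H_{22})=\l H_{22}$ with $\l\les0$, contradicting $(\ast)$. Hence $H=0$, which proves (ii).
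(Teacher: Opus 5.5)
Your proof is correct. Its skeleton matches the paper's: the case split on $k=\dim V_{n-1}$, the reduction via \autoref{prop:stable-decomposition} and \autoref{prop:stabilizable:B=0} to the block $(A_{22},C_{22})$, and the embedding $H=P\,\diag(0,H_{22})P^\top$ to produce a violating eigenmatrix.

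The genuine difference is how you kill the controllable part of $\wt H$. The paper writes out all blocks of $P^\top\cL_{\scp(-A,C)}(H)P$ and extracts $\cL_{\scp(-A_{11},C_{11})}(\wt H_{11})=\l\wt H_{11}$ and $B_1^\top\wt H_{11}=0$. It then invokes \autoref{thm:H-test-controllability} for the subsystem \rf{eq:A11B1C11}, which is exactly controllable because $\dim U_{n-1}=k$; the direction of that theorem being used is proved through the Gramian ODE and the positivity $\lan P(0),H\ran>0$. Finally, $\wt H\ges0$ forces the off-diagonal block to vanish.

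You instead prove directly, by induction on word length, that $HMB=0$ for every word $M$ over $\{A,C\}$. You sandwich the eigen-relation with $(MB)^\top$ and use $H\ges0$ to pass from $(CMB)^\top H(CMB)=0$ to $HCMB=0$. This gives $HV_{n-1}=0$, which kills both $\wt H_{11}$ and the off-diagonal block at once, in the original coordinates.

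Your route is more elementary and self-contained. It needs neither the exact controllability of the $(A_{11},B_1,C_{11})$ block nor the Gramian, and it does not use the sign of $\l$. Taking $V_{n-1}=\dbR^n$, it also gives a purely algebraic proof that $\dim V_{n-1}=n$ forces condition (ii) of \autoref{thm:H-test-controllability}. It also pinpoints exactly where the cone restriction is indispensable: for the indefinite $H$ of \autoref{example}, one has $HB=0$ and $(CB)^\top H(CB)=0$, yet $HCB\ne0$. The paper's route is more modular, reusing the controllability test it has already established.
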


\begin{proof} 
(i) $\Ra$ (ii): Let $V_{n-1}$ be defined by \rf{def:Vk}, and set $k\deq\dim V_{n-1}$.
If $k=n$, then system \rf{SDE:state} is exactly controllable,
and (ii) follows from \autoref{thm:H-test-controllability}. 
If $k=0$, then $B=0$, and (ii) follows from \autoref{prop:stabilizable:B=0}(iv). 

\ms   
  
Now assume $0<k<n$, and let $P$ be the orthogonal matrix in \autoref{thm:control-decomposition} 
satisfying \rf{Controllable-Form}. 
Then system \rf{eq:A11B1C11} is exactly controllable, and system \rf{eq:subsystem} is stabilizable. 
Take $\l\les0$ and $H\in\bar\dbS^n_+$ such that 
$$
\cL_{\scp(-A,C)}(H) = -HA - A^\top H + C^\top HC = \l H, \q B^\top H=0.
$$
Write $H$ in block form: 
$$
H =\bp H_{11} & K \\ K^\top & H_{22}\ep, \q 
H_{11}\in\bar\dbS^k_+, \q 
H_{22}\in\bar\dbS^{n-k}_+, \q 
K\in\dbR^{k\times (n-k)},
$$
and define
$$
\wt H \deq  P^\top HP=\bp\wt H_{11} & \wt K \\ \wt K^\top & \wt H_{22}\ep, \q 
\wt H_{11}\in\bar\dbS^k_+, \q 
\wt H_{22}\in\bar\dbS^{n-k}_+, \q 
\wt K\in\dbR^{k\times (n-k)}.
$$
Since $P$ is orthogonal, $B^\top H=0$ implies 
\begin{align}\label{block:B}
0 &= B^\top HP = B^\top PP^\top HP = (B_1^\top,0)\wt H = (B_1^\top\wt H_{11}, B_1^\top\wt K).
\end{align}
Moreover, using $\cL_{\scp(-A,C)}(H)=\l H$ and \rf{Controllable-Form}, we obtain
\begin{align}\label{block:H}
\l\wt H &= P^\top(-HA - A^\top H + C^\top HC)P = \bp\L_{11} & \L_{12}\\ \L_{12}^\top & \L_{22}\ep, 
\end{align}
where
\begin{align*}
\L_{11} &= -\wt H_{11}A_{11} - A_{11}^\top\wt H_{11} + C_{11}^\top\wt H_{11}C_{11}, \\ 
\L_{12} &= -\big(\wt H_{11}A_{12} + \wt K A_{22}+A_{11}^\top\wt K\big) 
           +\big(C_{11}^\top\wt H_{11}C_{12} + C_{11}^\top\wt KC_{22}\big), \\
\L_{22} &= -\big(\wt K^\top A_{12} + A_{12}^\top\wt K + \wt H_{22}A_{22} + A_{22}^\top\wt H_{22}\big) \\
 &\hp{=\ } +\!\big(C_{12}^\top\wt H_{11}C_{12} + C_{12}^\top\wt K C_{22} + C_{22}^\top\wt K^\top C_{12} 
           +C_{22}^\top\wt H_{22}C_{22}\big).
\end{align*}
Hence, comparing blocks in \rf{block:H} and \rf{block:B} gives 
$$
\L_{22}=\l\wt H_{22}, \q 
\cL_{\scp(-A_{11},C_{11})}(\wt H_{11}) = \l\wt H_{11}, \q 
B_1^\top\wt H_{11}=0. 
$$
Since system \rf{eq:A11B1C11} is exactly controllable, 
\autoref{thm:H-test-controllability} implies that $\wt H_{11}=0$. 
As $\wt H\ges0$, this forces $\wt K=0$. Consequently,
$$
\l\wt H_{22} = \L_{22} = -\wt H_{22}A_{22} - A_{22}^\top\wt H_{22} + C_{22}^\top\wt H_{22}C_{22}
=\cL_{\scp(-A_{22},C_{22})}(\wt H_{22}). 
$$ 
Since system \rf{eq:subsystem} is stabilizable and $\l\les0$, 
\autoref{prop:stabilizable:B=0} yields $\wt H_{22}=0$. 
Hence, $\wt H=0$, and thus $H=0$.  

\ms 

(ii) $\Ra$ (i): Suppose, to the contrary, that system \rf{SDE:state} is not stabilizable.
Since exact controllability implies stabilizability, we must have $\dim V_{n-1}<n$.
If $\dim V_{n-1}=0$, then $B=0$. By \autoref{prop:stabilizable:B=0}(iv), 
there exists $H\in\bar\dbS_+^n$, $H\ne0$, and $\l\les0$ such that 
$\cL_{\scp(-A,C)}(H)=\l H$, which contradicts (ii).  

\ms

Now assume $0<k<n$. By \autoref{prop:stable-decomposition}, 
the subsystem \rf{eq:subsystem} is not stabilizable.   
Hence, by \autoref{prop:stabilizable:B=0}(iv), there exists $H_{22}\in\bar\dbS_+^{n-k}$, 
$H_{22}\ne0$, and $\l\les0$ such that 
$$
\cL_{\scp(-A_{22},C_{22})}(H_{22})=\l H_{22}.
$$
Define $H\deq P\,\diag(0,H_{22})P^\top\in\bar\dbS_+^n\setminus\{0\}$, 
where $P\in\dbR^{n\times n}$ is the orthogonal matrix in \autoref{prop:stable-decomposition}.
Then a direct computation using \rf{Controllable-Form} shows that
\begin{align*}
P^\top\cL_{\scp(-A,C)}(H)P 
&= \diag(0,\cL_{\scp(-A_{22},C_{22})}(H_{22})) = \l\,\diag(0,H_{22}).
\end{align*}
Consequently, 
$$
\cL_{\scp(-A,C)}(H)=\l H, \q H\in\bar\dbS^n_+\setminus\{0\},  \q \l\les0. 
$$
However,  
$$
B^\top H = (B_1^\top,0)\,\diag(0,H_{22})P^\top =0, \q H\ne0. 
$$
This again contradicts (ii). Therefore system \rf{SDE:state} is stabilizable. 
\end{proof}

\end{document}